\newtheorem{theorem}{Theorem}
\numberwithin{theorem}{section}
\newtheorem{proposition}[theorem]{Proposition}
\newtheorem{lemma}[theorem]{Lemma}
\newtheorem{corollary}[theorem]{Corollary}
\newtheorem{example}[theorem]{Example}
\theoremstyle{definition}
\theoremstyle{definition}
\theoremstyle{definition}
\theoremstyle{definition}
\theoremstyle{definition}
\theoremstyle{definition}
\begin{document}

\title[Iterative square roots of functions]{ Iterative square roots of functions}

\author{B.V. Rajarama Bhat}
\email{bhat@isibang.ac.in}

\author{Chaitanya Gopalakrishna}
\email{cberbalaje@gmail.com, chaitanya\_vs@isibang.ac.in}

\address{Indian Statistical Institute, Stat-Math. Unit, R V College Post, Bengaluru 560059, India}
 \keywords{Iterative square root, piecewise affine linear, triangulation}
\subjclass[2020]{Primary: 39B12; Secondary: 37B02}



\begin{abstract}
An iterative square root of a function $f$ is a function $g$ such that
$g(g(\cdot ))=f(\cdot )$. We obtain new characterizations for detecting the non-existence of such square roots for self-maps on arbitrary
sets. This is used to prove that continuous self-maps with no square roots are dense in the space of all continuous self-maps for various topological spaces. The spaces studied include
those that are homeomorphic to the unit cube in ${\mathbb R}^m$ and to the whole of $\mathbb{R}^m$ for every positive integer $m.$  On the other hand, we also prove that every continuous self-map of a space homeomorphic to the unit cube in $\mathbb{R}^m$ with a fixed point on the boundary can be approximated by iterative squares of continuous self-maps.
\end{abstract}

\maketitle

\section{Introduction}\label{sec1}





\baselineskip 16pt
\parskip 10pt

Given a continuous map $g: X\to X$ of a topological  space $X$
and a positive integer $n$, an {\it iterative root of order $n$} (or simply {\it $n^{th}$ root}) of
$g$ is a continuous map $f: X\to X$ such that
\begin{eqnarray}\label{01}
f^n(x)=g(x), \quad \forall x\in X,
\end{eqnarray}
where for each non-negative integer $k$, $f^k$ denote the $k^{\rm
	th}$ order  iterate of $f$ defined recursively by $f^0={\rm id}$,
the identity map on $X$, and $f^{k}=f\circ f^{k-1}$. For $n=2$,
a solution of \eqref{01} is called an {\it iterative square root}
(or simply a {\it square root}) of $g$.
The {\it iterative root problem} \eqref{01}, which is closely related to the embedding flow problem \cite{fort1955,Kuczma1968,zdun2014}, the invariant curve problem \cite{kuczma1990}, and linearization \cite{Bogatyi}, is of particular interest \cite{Baron-Jarczyk,Bodewadt,Kuczma1968,Targonski1981,Targonski1995,zdun-soalrz} both in the theory of the functional equations and in that of dynamical
systems, and has applications in statistics, signal processing, computer graphics, and information techniques \cite{Iannella,Kindermann,Martin}. Since the initial works of Babbage \cite{Babbage1815}, Abel \cite{Abel}, and K\"{o}nigs \cite{Konigs}, various researchers have paid increasing attention to the iterative root problem, and many advances have been made on its solutions for various class of maps, e.g. continuous maps on intervals and in particular those which are piecewise monotone \cite{blokh-coven,kuczma1961,LiYangZhang2008,LiZhang2018,Lin2014,Lin-Zeng-Zhang2017,LiuJarczykZhang2012,LiuLiZhang2018,LiuZhang2011,Liu-zhang2021,Zhang-zhang2007,wzhang1997},  continuous complex maps \cite{rice1980,solarz2003,solarz2006,zdun2000}, series and transseries \cite{Edgar}, continuous maps on planes and $\mathbb{R}^m$ \cite{Lesniak2002,Lesniak2010,narayaninsamy2000,Yu2021}, and set-valued maps \cite{Jarczyk-Zhang,li2009,li2012,LiuLiZhang2021}. In particular, various  properties of the iterative roots have also been studied, see for instance, approximation \cite{zhang-zhang}, stability \cite{Xu-zhang,zhang-zeng}, differentiability  \cite{Kuczzma1969,zdun-zhang, zhang1995}, and  category and measure \cite{Blokh,simon1989}.


Given a topological space $X$, let $\mathcal{C}(X)$ denote the set
of all continuous self-maps of $X$. 
For each
positive integer $n$, let $\mathcal{W}(n;X):=\{f^n: f\in
\mathcal{C}(X)\}$, the set of $n^{\rm th}$ iterates of all the maps
in $\mathcal{C}(X)$, and $\mathcal{W}(X):=\bigcup_{n=2}^{\infty}
\mathcal{W}(n;X)$. Then it is interesting to ask how big
these spaces are inside $\mathcal{C}(X).$ We need  a metric or topology on ${\mathcal C}(X)$ to make this question more precise.

Let $X$ be a compact metric space with metric $d$. Then it is natural to have the {\it supremum metric} (or {\it uniform metric})
$$\rho (f,g)=\sup \{ d(f(x), g(x)): x\in X\}$$
on $C(X)$, and this is our convention unless mentioned
otherwise. Humke and Laczkovich considered the above question when
$X=[0,1]$ with the usual metric $d(x,y)=|x-y|.$ This paper is
heavily motivated by their results. They proved in
\cite{Humke-Laczkovich1989,Humke-Laczkovich} that  $\mathcal{W}(2;[0,1])$ is not dense in $\mathcal{C}([0,1])$, $\mathcal{W}(2;[0,1])$ contains no balls of $\mathcal{C}([0,1])$ (i.e., its complement $(\mathcal{W}(2;[0,1]))^c$ is
dense in $\mathcal{C}([0,1])$), and
$\mathcal{W}(n;[0,1])$ is an analytic non-Borel subset of $\mathcal{C}([0,1])$.
Simon proved in \cite{simon1990,simon1991a,simon1991b} that  $\mathcal{W}(2;[0,1])$ is nowhere
dense in $\mathcal{C}([0,1])$ and
$\mathcal{W}([0,1])$ is not  dense in $\mathcal{C}([0,1])$.
Subsequently, nowhere denseness of $\mathcal{W}([0,1])$ in ${\mathcal
	C}([0,1])$ was proved by Blokh \cite{Blokh} using a method
different from that of Humke-Laczkovich and Simon.


It is seen that majority of the papers on iterative
roots of functions are restricted to continuous functions on
intervals of the real line. This is not surprising because the
setup is simpler to deal with. It has several advantages, including
the intermediate value theorem, monotonicity of bijections, and so
on. Extending these results to higher dimensions or more 
generic topological spaces is generally complex. We
develop a new approach (Theorem \ref{T3}) to determine the
non-existence of square roots of functions on arbitrary sets in Section \ref{sec2}. This
approach can be  used effectively to construct continuous functions on
a wide range of topological spaces, which do not even have
discontinuous square roots.
We demonstrate the procedure in Section \ref{sec3} by showing that  the continuous self-maps
without square roots are dense in the space of all continuous self-maps of the unit cube in ${\mathbb R}^m$ for the supremum metric $\rho$ (see
Theorem (\ref{Thm1})).
This is a
significant generalization of a similar result in
\cite{Humke-Laczkovich} (Corollary 5, p. 362) for intervals of the real line. We
believe that the most significant contribution of this article is
the method itself. 
The scheme can be fine-tuned to
various other settings.
We demonstrate this claim in Section \ref{sec4}  by proving that the continuous self-maps without square roots are dense in the space of all continuous self-maps of ${\mathbb R}^m$ for the compact-open topology.

The outline of the paper is as follows. In Section \ref{sec2}, we study
iterates and orbits of functions on arbitrary sets (without any
topology). Isaacs conducted a detailed investigation on the  fractional iterates of such functions in \cite{Issacs}. Here we discuss some of the basic aspects of such an approach. The main new result is Theorem \ref{T3}, which presents a variety of cases in which we can be certain that the given function does not admit any iterative square root. This will be our principal tool in Sections \ref{sec3} and \ref{sec4}. Section \ref{sec2} also
contains some background material and new results on fractional iterates of continuous self-maps on general topological spaces. In particular, Theorem \ref{connected components} shows that
in some cases, the problem of the non-existence of iterative roots of continuous maps can be reduced to elementary combinatorics.

In Section \ref{sec3}, we study iterative square roots of continuous
self-maps of the  unit cube $I^m$ in ${\mathbb R}^m.$ We find it
convenient to triangulate $I^m$ and sew together affine linear
maps piece by piece. For this purpose, we use some fundamental
notions and notations from the theory of simplicial complexes.
Given any continuous self-map of $I^m$, we approximate it with a piecewise
affine linear map, and in turn close to any such map, we
find another piecewise affine linear map with no square roots
(see Theorem \ref{Thm1}). This is the main result of this section. In
contrast, we also show that any continuous self-map of $I^m$
with a fixed point on the boundary can be approximated by iterative
squares of continuous self-maps (see Theorem \ref{boundary theorem}). A
simple example shows that  fixed points
on the boundary are not required for such approximations.

In Section \ref{sec4}, we analyze continuous self-maps of ${\mathbb R}^m$. As in Section \ref{sec3}, we prove that the continuous self-maps with no square roots are dense in the
space of all continuous self-maps in a suitable topology, namely the compact-open topology (see Theorem \ref{Thm2}). Finally, in  Section \ref{sec5},  we prove that the squares of continuous self-maps are $L^p$ dense in the space of all continuous self-maps of $I^m$, generalizing a result of \cite{Humke-Laczkovich} to higher dimensions.





\section{Functions on arbitrary sets and general topological spaces}\label{sec2}

In this section, we study iterative roots of self-maps on arbitrary sets and continuous self-maps on general topological spaces. First, we consider $X$ to be a non-empty set with no predefined topology, and prove some new results on the existence and non-existence of iterative square roots of maps in $\mathcal{F}(X)$, the set of all self-maps of $X$.   Surprisingly, some of
these results are useful even when $X$ has a topology and we seek for roots inside  ${\mathcal C}(X)$, the space of all continuous self-maps of $X$.

Henceforth,  let ${\mathbb Z}_+$ denote the set $\{0,1,2,\ldots\}$ of non-negative integers, and for
each $f\in \mathcal{F}(X)$  and $A\subseteq X$,  let  $R(f)$
denote the range of $f$ and $f|_A$ the restriction of $f$ to $A$.
Given an $f\in
\mathcal{F}(X)$, we define its {\it graph} as the directed graph $G=
(X, E)$, whose vertex set is $X$ and the edge set is $E=\{(x, f(x)):
x\in X \}$. In other words, we are looking at the same mathematical
structure from a different viewpoint. This allows us to borrow some
notions from graph theory. To begin, consider the connected
components of $G$ in the context of graph theory. We call $C$ a connected component of $X$ if it is the vertex set of some connected component of the graph of $f$. Observe that a pair
of vertices $x$ and $y$ are in the same connected component
$C\subseteq X$ if and only if there exist $m,n \in {\mathbb Z}_+$
such that $f^m(x)=f^n(y).$ The connected component of $X$ containing $x$ is called the {\it orbit} of $x$ under $f$, and is denoted by
$O_f(x)$. An extensive analysis of iterative square roots of
functions in $\mathcal{F}(X)$ can be found in \cite{Issacs}. In what
follows, we analyze them in some special cases and  present
all the results needed for our further discussions.

Let $f\in \mathcal{F}(X)$, and $G_1$ and $G_2$ be two connected components of the graph $G$ of $f$.
An {\it isomorphism} of $G_1$ and $G_2$ is a bijective function
$\phi:C_1 \to C_2$ between the vertex sets of $G_1$ and $G_2$ such
that any two vertices $x$ and $y$ are adjacent in $G_1$ if and only
if $\phi(x)$ and $\phi(y)$ are adjacent in $G_2$. By the definition
of graphs, this means that $y=f(x)$ if and only if  $\phi(y)=
f(\phi(x)).$ In other words, $G_2$ is isomorphic to $G_1$ if and
only if there exists a bijective function $\phi:C_1\to C_2$ such that $\phi
\circ f= f\circ \phi.$

\begin{proposition}\label{two components}
	Let $f\in \mathcal{F}(X)$ be  such that the
	graph of $f$ has exactly two isomorphic connected components. Then
	$f$ has an iterative square root in $\mathcal{F}(X)$. 
\end{proposition}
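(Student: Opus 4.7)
The plan is to exploit the intertwining relation guaranteed by an isomorphism of the two components. Write $X = C_1 \sqcup C_2$, where $C_1, C_2$ are the vertex sets of the two components, and let $\phi : C_1 \to C_2$ be a bijection realizing the isomorphism, so that $\phi \circ f|_{C_1} = f|_{C_2} \circ \phi$.

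I would then define the candidate square root $g \in \mathcal{F}(X)$ piecewise by
\[
g(x) = \begin{cases} \phi(x) & \text{if } x \in C_1, \\ f\bigl(\phi^{-1}(x)\bigr) & \text{if } x \in C_2. \end{cases}
\]
This is well defined because $C_1$ and $C_2$ partition $X$, and $g$ cycles the two components ($g(C_1) \subseteq C_2$ and $g(C_2) \subseteq C_1$).

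The verification that $g \circ g = f$ is a direct two-case check. For $x \in C_1$, $g(x) = \phi(x) \in C_2$, so $g(g(x)) = f(\phi^{-1}(\phi(x))) = f(x)$. For $x \in C_2$, $g(x) = f(\phi^{-1}(x)) \in C_1$ (since $\phi^{-1}(x) \in C_1$ and $f$ preserves each component), hence
\[
g(g(x)) = \phi\bigl(f(\phi^{-1}(x))\bigr) = f\bigl(\phi(\phi^{-1}(x))\bigr) = f(x),
\]
where the middle equality uses the intertwining $\phi \circ f = f \circ \phi$.

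There is essentially no obstacle here: the only subtlety is making sure one uses the intertwining in the correct direction when handling the $C_2$ case, and noting that $f$ sends each $C_i$ into itself (which follows from $C_i$ being a connected component of the graph of $f$). The construction is an instance of the standard trick of producing a square root by interleaving two isomorphic copies of a dynamical system via a half-shift.
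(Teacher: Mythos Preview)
Your proof is correct and essentially identical to the paper's own argument: both define $g$ by $\phi$ on $C_1$ and $f\circ\phi^{-1}$ on $C_2$, and verify $g^2=f$ via the same two-case computation using the intertwining relation $\phi\circ f=f\circ\phi$.
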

\begin{proof}
	Let $C_1$ and $C_2$ be the two isomorphic connected components of $X$ corresponding to those of the graph of $f$. Then $X=C_1\cup C_2,$ $C_1\cap C_2=\emptyset$, and as
	indicated above, there exists a bijective function $\phi:C_1\to C_2$
	such that $\phi\circ f= f\circ \phi$. Define a map $g:X\to X$ by
	\begin{eqnarray*}
		g(x) = \left\{ \begin{array}{cl}
			\phi(x)& ~\mbox{if}~x\in C_1,\\
			f\circ \phi^{-1}(x)& ~\mbox{if}~x\in C_2.\end{array}\right.
	\end{eqnarray*}
	Now, we have  $g(x)=\phi(x)\in C_2$  for each $x\in C_1$, implying that $g^2(x)=
	f\circ \phi^{-1}\circ \phi(x)= f(x)$. Similarly,  we have $g(x)=f\circ \phi^{-1}(x) \in C_1$ for each $x\in
	C_2$, and so
	$g^2(x)= \phi\circ f\circ \phi^{-1}(x)= f(x)$.
	Therefore $g$ is an iterative square root of $f$ on $X$.
\end{proof}
The above proposition leads to the following result, where a fixed point $x$ of $f$
is said to be {\it isolated} if there is no $y\ne x$ in $X$ such that
$f(y)=x$. By convention,  the empty set and all infinite sets are
assumed to have an even number of elements here and elsewhere.
\begin{theorem}\label{many components}
	Let $f\in \mathcal{F}(X)$ be such that excluding isolated fixed
	points of $f$, the number of isomorphic copies is even for each
	connected component of the graph of $f$. Then $f$ has an iterative
	square root in $\mathcal{F}(X)$.
\end{theorem}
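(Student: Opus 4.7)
The plan is to reduce the statement to Proposition \ref{two components} by decomposing $X$ into pieces on which a square root can be built independently, and then pasting the pieces together. Since the graph of $f$ is the disjoint union of its connected components and each connected component is $f$-invariant, any family of self-maps, one on each union of components, whose squares agree with the corresponding restrictions of $f$, will paste together to give a square root of $f$ on all of $X$.

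First I would handle the isolated fixed points separately. If $x\in X$ is an isolated fixed point of $f$, then $\{x\}$ is itself a connected component of the graph of $f$, and defining $g(x)=x$ gives $g^2(x)=x=f(x)$. So on the union $X_0$ of all isolated fixed points we may take $g=\mathrm{id}$.

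Next I would partition the remaining components into isomorphism classes. Let $X_1=X\setminus X_0$, and let $\{\mathcal{I}_\alpha\}_{\alpha\in A}$ be the collection of isomorphism classes of connected components of the graph of $f|_{X_1}$. By hypothesis, each $\mathcal{I}_\alpha$ has an even number of elements, where infinite is treated as even by the stated convention. Hence for each $\alpha$ I can choose an index set $J_\alpha$ and a bijection between $\mathcal{I}_\alpha$ and $J_\alpha\times\{1,2\}$; this amounts to writing the components in $\mathcal{I}_\alpha$ as a disjoint union of pairs $\{C_{\alpha,j,1},C_{\alpha,j,2}\}$ with $C_{\alpha,j,1}\cong C_{\alpha,j,2}$ as graphs. (For a finite class of size $2n$ this is a matching; for an infinite class, any bijection with $J_\alpha\times\{1,2\}$ for some $J_\alpha$ works, which is why the convention that infinite sets are even is essential.)

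For each such pair, restricting $f$ to $C_{\alpha,j,1}\cup C_{\alpha,j,2}$ produces a self-map whose graph has exactly two isomorphic connected components. Proposition \ref{two components} then supplies a square root $g_{\alpha,j}$ on $C_{\alpha,j,1}\cup C_{\alpha,j,2}$. Finally I would define $g\colon X\to X$ by setting $g=\mathrm{id}$ on $X_0$ and $g=g_{\alpha,j}$ on each pair $C_{\alpha,j,1}\cup C_{\alpha,j,2}$; because the components (and hence the pairs) are pairwise disjoint and $f$-invariant, $g$ is a well-defined self-map of $X$ with $g^2=f$.

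The only real subtlety is the pairing step for isomorphism classes of infinite cardinality; once one accepts the stated convention that infinite sets are even, a straightforward bijection argument (using the axiom of choice to pick an isomorphism inside each class and a partition of the class into pairs) completes the proof, so I expect no serious obstacle beyond being careful with this bookkeeping.
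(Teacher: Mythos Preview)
Your proposal is correct and follows essentially the same route as the paper: define $g$ to be the identity on isolated fixed points, pair up the remaining isomorphic components (using the convention that infinite counts as even), and invoke Proposition~\ref{two components} on each pair. Your version is simply more explicit than the paper's about why $\{x\}$ is a full component when $x$ is an isolated fixed point and about the bookkeeping needed to pair infinitely many components.
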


\begin{proof} Let $g:X\to X$ be a map defined as $g(x)=x$ for isolated
	fixed points $x\in X$ of $f$ and $g$ as in the previous proposition on the union of pairs of connected components of $X$ corresponding to  pairs of
	isomorphic copies of those of the graph of $f$. Since there are an even
	number of connected components, we can pair them, and
	the result follows from the previous proposition.
\end{proof}

The above results are given for general $f\in \mathcal{F}(X)$. If we restrict ourselves to injective maps, we can describe the functions with square roots transparently, as seen below. To make our thoughts more concrete, we borrow some ideas and terminologies from \cite{BDR}.
Note that given an injective map $f\in \mathcal{F}(X)$, any two
points $x, y \in X$ are in the same orbit under $f$ if and only if there
exists an $n\in {\mathbb Z}_+$ such that either $y=f^n(x)$ or
$x=f^n(y).$ Indeed, an orbit under $f$ has one of the following
forms for some $x\in X$:
\begin{description}
	\item[{\rm (i)}] $\{x, f(x), f^2(x), \ldots,f^{d-1}(x)\}$ for some  $d\in \mathbb{N}$ with $f^d(x)=x$;
	\item[{\rm (ii)}] $\{x, f(x), f^2(x), \ldots\}$ with $x\notin f(X)$;
	\item[{\rm (iii)}] $\{\ldots ,f^{-2}(x), f^{-1}(x), x, f(x), f^2(x), \ldots\}$.
\end{description}
Therefore it follows that an  injective map $f\in
\mathcal{F}(X)$ with exactly one orbit $O_f(x)$ for some $x\in X$ is
in bijective correspondence with precisely one of the following
maps (see \cite{BDR}):
\begin{description}
	\item[{\rm (a)}] {\it Cyclic permutations}: For $d\in {\mathbb N}$, consider
	${\mathbb Z}_d =\{0, 1, 2, \ldots , d-1\}$ with addition modulo $d.$
	Define $s_d: {\mathbb Z}_d\to {\mathbb Z}_d$ by
	$s_d(k)=k+1~(\mbox{mod}~d)$. Note that ${\mathbb Z}_1=\{0\}$ and
	$s_1(0)=0$. Then $s_d$ is bijective and has exactly one orbit.
	\item[{\rm (b)}] {\it Unilateral translation/shift}: Define $s_+: {\mathbb
		Z}_+\to {\mathbb Z}_+$ by $s_+(k)=k+1.$ Then $s_+$ is a shift with a
	single orbit.
	\item[{\rm (c)}]  {\it Bilateral translation}: Define $s: {\mathbb Z}\to
	{\mathbb Z}$ by $s(k)=k+1$. Then $s$ is bijective and has exactly one
	orbit.
\end{description}
More precisely, if $f\in \mathcal{F}(X)$ has  exactly one orbit
$O_f(x)$, then there exists a bijective function $\phi: O_f(x)\to Z$
defined by $\phi(f^k(x))=k$, where $Z$ is $\mathbb{Z}_d$ (for some
$d\in \mathbb{N}$), $\mathbb{Z}_+$ or $\mathbb{Z}$ according as
$O_f(x)$ has the form  (i),  (ii) or  (iii),
respectively.  Consider any arbitrary non-empty sets $M_d$'s, $M_+,$ and
$M.$ We define $1_{M_d}\times s_d: M_d\times {\mathbb Z}_d\to
M_d\times {\mathbb
	Z}_d$ by
\begin{eqnarray*}
	(1_{M_d}\times s_d)(l,k)= (l, k+1({\rm mod}~d)),
\end{eqnarray*}
called the {\it $d$-cyclic
	permutation with multiplicity $M_d$} for each $d\in \mathbb{N}$.
Similarly, we define $1_{M_+}\times s_+:M_+\times \mathbb{Z}_+\to M_+\times \mathbb{Z}_+$ and $1_{M}\times s:M\times \mathbb{Z}\to M\times \mathbb{Z}$ by
\begin{eqnarray*}
	(1_{M_+}\times s_+)(l,k)= (l, k+1)\quad \text{and}\quad (1_M\times s)(l,k)= (l, k+1),
\end{eqnarray*}
called the {\it unilateral shift with multiplicity $M_+$} and the  {\it bilateral translation with multiplicity
	$M$}, respectively.  Then given any injective function $f\in \mathcal{F}(X)$, decomposing $X=\bigsqcup_{x\in X} O_f(x)$ into orbits under $f$ we see that $(X,f)$ is in bijective correspondence with $(Y,\tilde{f})$, where $Y= \bigsqcup _{d\in {\mathbb
		N}}(M_d\times {\mathbb Z}_d)\bigsqcup (M_+\times {\mathbb Z}_+)\bigsqcup
(M\times {\mathbb Z})$ for some suitable multiplicity spaces
$M_d$, $M_+$ and $M$ (some of these sets
may be absent in the union), and $\tilde{f}$ is $1_{M_d}\times s_d$,
$1_{M_+}\times s_+$ and $1_M\times s$ on $M_d\times {\mathbb Z}_d$,
$M_+\times {\mathbb Z}_+$ and $M\times {\mathbb Z}$, respectively.
Furthermore,  the cardinalities of these multiplicity spaces are uniquely
determined. We call $(m_1, m_2, \ldots ,  m_+,m)$ as the
multiplicity sequence of $f$, where $m_d$, $m_+$ and $m$ are the
cardinalities of $M_d$ (for $d\in {\mathbb N}$),  $M_+$ and $M$,
respectively.

\begin{theorem}\label{T2A}
	Let $f\in \mathcal{F}(X)$ be an injective map. Then $f$ has an
	iterative  square root in $\mathcal{F}(X)$ if and only if the
	multiplicity sequence $(m_1, m_2, \ldots ,  m_+,m)$ of $f$ satisfies
	that $m_d$ with $d$ even, $m_+$ and $m$ are even ($0$ and infinity
	allowed).
\end{theorem}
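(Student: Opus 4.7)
The plan is to handle both directions of the equivalence by decomposing along the orbit structure of $f$, reducing the statement to an orbit-by-orbit analysis.

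For necessity, suppose $g \in \mathcal{F}(X)$ satisfies $g \circ g = f$. Then $g$ is automatically injective, and the identity $g \circ f = g^3 = f \circ g$ shows that $g$ commutes with $f$, so $g$ permutes the $f$-orbits while preserving their isomorphism type (i)--(iii). Since $g^2 = f$ leaves each $f$-orbit invariant, $g$ induces an involution on the set of $f$-orbits: every orbit is either \emph{fixed} by $g$, in which case $g|_O$ is an internal square root of $f|_O$, or \emph{swapped} with another orbit to which it is isomorphic via $g|_O$. The crux of this direction is to verify that an internal square root cannot exist when $O$ is cyclic of even length, unilateral, or bilateral; consequently the orbits of those three types must occur in swapped pairs, forcing $m_d$ (for even $d$), $m_+$, and $m$ to be even.

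The internal-impossibility statements are verified case by case. If $O$ has cyclic length $d$ even, then $f|_O \simeq s_d$ is an odd permutation (sign $(-1)^{d-1} = -1$), whereas any square on a finite set is an even permutation, contradiction. If $O$ is unilateral, identify it with $(\mathbb{Z}_+, s_+)$; writing $h := g|_O$ and using injectivity of $h$, one verifies the disjoint decomposition
\[
\mathbb{Z}_+ \setminus R(h^2) \;=\; \bigl(\mathbb{Z}_+ \setminus R(h)\bigr) \;\sqcup\; h\bigl(\mathbb{Z}_+ \setminus R(h)\bigr),
\]
whose cardinality is $2\,|\mathbb{Z}_+ \setminus R(h)|$, necessarily even (or infinite); but $|\mathbb{Z}_+ \setminus R(s_+)| = 1$, contradiction. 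If $O$ is bilateral, identify it with $(\mathbb{Z}, s)$; the relation $h \circ s = s \circ h$ forces $h(k+1) = h(k) + 1$, so $h(k) = k + c$ for some $c \in \mathbb{Z}$, and then $h^2(k) = k + 2c$ cannot equal $k+1$.

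For sufficiency, suppose the parity conditions hold. I construct $g$ by patching local square roots on each piece of the orbit decomposition $X = \bigsqcup_{x} O_f(x)$. On any cyclic orbit of odd length $d$, let $g$ correspond to the permutation $s_d^{(d+1)/2}$; its square is $s_d^{d+1} = s_d$. The remaining orbits are cyclic of even length, unilateral, or bilateral, and by hypothesis occur with even multiplicity (allowing $0$ or $\infty$) within each isomorphism class; hence they can be grouped into pairs of mutually isomorphic orbits, and on each such pair I apply Proposition \ref{two components} to obtain a square root on the union. Assembling these pieces yields the desired $g$ with $g^2 = f$. I expect the main technical obstacle to be the unilateral case of the internal impossibility, where a parity-of-range-deficit counting argument must replace the more elementary sign argument available in the finite cyclic case.
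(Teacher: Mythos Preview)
Your argument is correct, and the sufficiency direction matches the paper's essentially verbatim. For necessity you take a genuinely different route. The paper argues from the $g$-side: since $g$ is injective it carries its own multiplicity sequence $(m_1',m_2',\ldots,m_+',m')$, and one computes directly that a cyclic $g$-orbit of even length $2k$ splits under $g^2$ into two cyclic $f$-orbits of length $k$, while a unilateral (resp.\ bilateral) $g$-orbit splits into two unilateral (resp.\ bilateral) $f$-orbits; reading off the resulting sequence gives $m_d=2m_{2d}'$ for $d$ even, $m_+=2m_+'$, $m=2m'$. You instead argue from the $f$-side via the involution that $g$ induces on $f$-orbits, together with three internal-impossibility lemmas (permutation sign, range-deficit parity, integer translation). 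What your approach buys is a clean conceptual explanation of \emph{why} precisely these orbit types obstruct square roots; what the paper's approach buys is that the type-preservation of swapped pairs comes for free from the $g$-orbit classification. In your write-up that step is slightly underjustified: the assertion that swapped orbits are ``isomorphic via $g|_O$'' is not quite accurate in the unilateral case (if $O,O'$ are both unilateral then one of $g|_O:O\to O'$ and $g|_{O'}:O'\to O$ misses the base point of its target, though the other is a bijection), and the general claim that $g$ preserves isomorphism type is stated without proof. The quickest fix is to observe that a swapped pair $O\cup O'$ constitutes a single $g$-orbit and then apply the orbit classification to $g$---at which point the two approaches essentially merge.
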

\begin{proof}
	Let $f\in \mathcal{F}(X)$ be an injective map and $(m_1, m_2, \ldots ,  m_+,m)$ be the multiplicity sequence of $f$.
	Suppose that $f=g^2$ for some $g\in \mathcal{F}(X)$.  Then, clearly $g$ is an  injective map on $X$, and therefore we can associate a multiplicity sequence $(m_1', m_2', \ldots ,  m_+',m')$ for $g$. Further,
	it is
	easily seen that an orbit under $g$  corresponding to a cyclic permutation on ${\mathbb Z}_d$  gives rise to two orbits (resp. a unique orbit)  under $g^2$ corresponding to cyclic permutation on ${\mathbb Z}_{\frac {d}{2}}$ (resp. on $\mathbb{Z}_d$) whenever  $d$ is even (resp. odd). Similarly, an orbit under $g$  corresponding to the unilateral shift on ${\mathbb Z}_+$ (resp. the bilateral translation on $\mathbb{Z}$) gives rise to two orbits   under $g^2$ corresponding to the unilateral shift on ${\mathbb Z}_+$ (resp. the bilateral translation on $\mathbb{Z}$).  Therefore the multiplicity sequence of $g^2$ is $(m_1'+2m_2', 2m_4', m_3'+2m_6', 2m_8',
	m_5'+2m_{10}', 2m_{12}', \ldots, 2m_+', 2m')$. Consequently, we must have $m_+=2m_+'$, $m=2m'$, and
	\begin{eqnarray*}
		m_d = \left\{ \begin{array}{cl}
			m_d'+2m_{2d}'&~\mbox{if}~d~\text{is odd},\\
			2m_{2d}'& ~\mbox{if}~d~\text{is even},\end{array}\right.
	\end{eqnarray*}
	and hence $m_d$ for $d$ even,  $m_+$ and $m$ are even.

	Conversely, suppose that the multiplicity sequence $(m_1, m_2,
	\ldots, m_+, m)$ of $f$ satisfies that $m_d$ for $d$ even,  $m_+$
	and $m$ are even. Then $X$ can be decomposed into orbits under $f$
	as $X=\bigsqcup_{x\in X} O_f(x)$ and $(X,f)$ is in bijective
	correspondence with $(Y,\tilde{f})$, where $Y= \bigsqcup _{d\in
		{\mathbb
			N}}(M_d\times {\mathbb Z}_d)\bigsqcup (M_+\times {\mathbb Z}_+)\bigsqcup
	(M\times {\mathbb Z})$ such that
	\begin{enumerate}
		\item[$\bullet$] each orbit $O_f(x)$ in $X$ under $f$ has one of the above-mentioned forms  (i),  (ii) or  (iii),
		\item[$\bullet$] the cardinalities of multiplicity spaces $M_d$ (for all $d\in \mathbb{N}$),
		$M_+$ and $M$ are $m_d$, $m_+$ and $m$, respectively, and
		\item[$\bullet$]
		$\tilde{f}$ is $1_{M_d}\times s_d$,  $1_{M_+}\times
		s_+$ and $1_M\times s$ on $M_d\times {\mathbb Z}_d$, $M_+\times {\mathbb Z}_+$ and $M\times {\mathbb Z}$, respectively.
	\end{enumerate}
	Now, define a function $g:X\to X$ as follows. If $d\in \mathbb{N}$ is odd, then for each of the orbit $O_f(x)=\{x,f(x),f^2(x),\ldots, f^{d-1}(x)\}$ corresponding to the cyclic permutation on ${\mathbb Z}_d$, define $$g(f^l(x))=f^{l+\frac{d+1}{2}(\text{mod}~d)}(x)$$ for all $0\le l\le d-1$. Then $$g^2(f^l(x))=g(f^{l+\frac{d+1}{2}(\text{mod}~d)}(x))=f^{l+d+1(\text{mod}~d)}(x)=f^{l+1}=f(f^l(x))$$ for all $0\le l\le d-1$, implying that $g^2=f$ on $O_f(x)$. If $d\in \mathbb{N}$ is even, then as $m_d$ is even, by  pairing any two distinct orbits $O_f(x)=\{x,f(x),f^2(x),\ldots, f^{d-1}(x)\}$ and $O_f(y)=\{y,f(y),f^2(y),\ldots, f^{d-1}(y)\}$ corresponding to the cyclic permutation on ${\mathbb Z}_d$, define $g$ on $O_f(x)\cup O_f(y)$ by
	\begin{eqnarray*}
		g(z) = \left\{ \begin{array}{cl}
			\phi(z)& ~\mbox{if}~z\in O_f(x),\\
			f\circ \phi^{-1}(z)& ~\mbox{if}~z\in O_f(y),\end{array}\right.
	\end{eqnarray*}
	where  $\phi:O_f(x)\to O_f(y)$ is a bijective
	function such that $\phi\circ f= f\circ \phi$. Then, by a similar argument as in Proposition  \ref{two components}, it follows that $g^2=f$ on $O_f(x)\cup O_f(y)$. Since $m_+$ (resp. $m$) is even, $g$ can be defined similarly on the union $O_f(x)\cup O_f(y)$ of each pair of distinct orbits $O_f(x)$ and $O_f(y)$ corresponding to the unilateral shift on ${\mathbb Z}_+$ (resp. the bilateral translation on $\mathbb{Z}$) such that $g^2=f$ on $O_f(x)\cup O_f(y)$.  Therefore $f$ has a square root in $\mathcal{F}(X)$.
\end{proof}

For each $f\in \mathcal{F}(X)$ and $x\in X$, let $f^{-1}(x)$ and $f^{-2}(x)$ denote the usual inverse images defined by $f^{-1}(x)=\{y\in X:
f(y)=x\}$ and $f^{-2}(x)=\{y\in X: f^2(y)=x\}$. Further, for each set $A$, let
$\# A$ denote the number of elements or the cardinality of $A$.
So far, we have studied various conditions under which maps
in $\mathcal{F}(X)$ have square roots. We now have some instances in which they
have no square roots. The following result is very useful for constructing functions without square roots in the next section.

\begin{theorem}\label{T3}
	Let $f\in \mathcal{F}(X)$ be such that $f(x_0)\ne x_0$ for some $x_0\in X$.
	Then $f$ has no iterative square roots in $\mathcal
	{F}(X)$ in the following cases:
	\begin{description}
		\item[Case (i)]  $\# f^{-2}(x_0)>1$, and $\# f^{-1}(x)\leq 1$ for all $x\neq
		x_0$;
		
		\item[Case (ii)] $ f^{-2}(x_0)$ is infinite, and $ f^{-1}(x)$ is finite for
		all $x\ne x_0$;
		
		\item[Case (iii)]  $f^{-2}(x_0)$ is uncountable, and $f^{-1}(x)$ is countable for all $x\ne
		x_0$.
	\end{description}
	
\end{theorem}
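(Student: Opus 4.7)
The plan is to argue by contradiction: assume some $g\in\F(X)$ satisfies $g^2=f$, and derive a cardinality contradiction for $f^{-2}(x_0)=g^{-4}(x_0)$. The key observation is the elementary inclusion $g^{-1}(y)\subseteq f^{-1}(g(y))$, so whenever $g(y)\neq x_0$ the hypothesis on $\#f^{-1}(\cdot)$ controls $\#g^{-1}(y)$. The strategy is to walk backwards from $x_0$ through the successive preimages $g^{-1}(x_0),g^{-2}(x_0),g^{-3}(x_0),g^{-4}(x_0)$, propagating the cardinality bound one layer at a time.

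To set up, I would first record that $g(x_0)\neq x_0$, since otherwise $f(x_0)=g^2(x_0)=g(x_0)=x_0$, contradicting the hypothesis. Writing $y_0:=g(x_0)$ and $P:=g^{-1}(x_0)$, every $p\in P$ satisfies $f(p)=g(g(p))=g(x_0)=y_0$, so $P\subseteq f^{-1}(y_0)$; since $y_0\neq x_0$, the hypothesis gives $\#P\leq 1$, $\#P$ finite, or $\#P$ countable in Cases (i), (ii), (iii) respectively. Moreover every $p\in P$ has $p\neq x_0$, for otherwise $g(x_0)=x_0$.

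Next, since $g^{-2}(\cdot)=f^{-1}(\cdot)$, we get $g^{-3}(x_0)=\bigcup_{p\in P}f^{-1}(p)$; each summand has the same cardinality bound (since each $p\neq x_0$), so the appropriate union (at most one element, finite union of finite sets, countable union of countable sets) yields the same bound on $\#g^{-3}(x_0)$. For the final layer I would verify that every $u\in g^{-3}(x_0)$ has $g(u)\neq x_0$, for otherwise $x_0=g^3(u)=g^2(x_0)=f(x_0)$, contradicting the hypothesis. Therefore $g^{-1}(u)\subseteq f^{-1}(g(u))$ obeys the cardinality bound, and
\[
f^{-2}(x_0)=g^{-4}(x_0)=\bigcup_{u\in g^{-3}(x_0)}g^{-1}(u)
\]
has size at most $1$, finite, or countable in the three cases, contradicting the hypothesis on $\#f^{-2}(x_0)$.

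I do not expect a serious obstacle; the proof is essentially a bookkeeping exercise of propagating cardinality bounds through four preimage steps. The only delicate points are the three ``non-coincidence'' observations, namely $g(x_0)\neq x_0$, $p\neq x_0$ for every $p\in g^{-1}(x_0)$, and $g(u)\neq x_0$ for every $u\in g^{-3}(x_0)$, each of which follows immediately from $f(x_0)\neq x_0$. With these in hand, all three cases close by the same uniform computation.
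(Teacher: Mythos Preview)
Your proof is correct. The key inclusion $g^{-1}(y)\subseteq f^{-1}(g(y))$ and the three non-coincidence checks ($g(x_0)\neq x_0$, $p\neq x_0$ for $p\in g^{-1}(x_0)$, and $g(u)\neq x_0$ for $u\in g^{-3}(x_0)$) are exactly what is needed, and the layer-by-layer propagation of the cardinality bound through $g^{-1}(x_0)$, $g^{-3}(x_0)$, $g^{-4}(x_0)=f^{-2}(x_0)$ closes all three cases uniformly.

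The paper's argument rests on the same observations but is organized dually: instead of walking \emph{backward} through preimages $g^{-k}(x_0)$, it walks \emph{forward} through images, setting $A_{-2}=f^{-2}(x_0)$, $B_{-2}=g(A_{-2})$, $\tilde A_{-1}=f(A_{-2})$, $\tilde B_{-1}=g(\tilde A_{-1})$, and derives a contradiction by showing that $\tilde A_{-1}$ must be simultaneously ``large'' (because $A_{-2}$ is large and fibers over $\tilde A_{-1}$ are small) and ``small'' (because $\tilde A_{-1}=g(B_{-2})$ with $B_{-2}\subseteq f^{-1}(\tilde B_{-1})$ and $\tilde B_{-1}\subseteq f^{-1}(y_0)$ small). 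Your backward preimage chase is a bit more direct and treats the three cases with a single template; the paper's forward version makes the role of $\tilde A_{-1}$ as the ``pivot'' set more explicit. Either way, the substance is the same: the inclusion $g^{-1}(\cdot)\subseteq f^{-1}(g(\cdot))$ together with $f(x_0)\neq x_0$ forces the cardinality of $f^{-2}(x_0)$ to obey the ambient bound on $f^{-1}$.
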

\begin{proof}
	Suppose that $f=g^2$ for some $g\in \mathcal{F}(X)$.
	Consider the action of $g$ on various subsets of $X$ around $x_0$:
	\begin{eqnarray*}
		A_{-2}  \xrightarrow{g} B_{-2}\xrightarrow{g} A_{-1} \xrightarrow{g}B_{-1} \xrightarrow{g} \{x_0\} \xrightarrow{g} \{y_0\},
	\end{eqnarray*}
	where $y_0=g(x_0)$,    $A_{-1} =f^{-1}(x_0)$, $A_{-2}  =f^{-2}(x_0)$, $B_{-1}=g(A_{-1})$ and $B_{-2}=g(A_{-2})$. Let $\tilde{A}_{-1}=f(A_{-2})$ and
	$\tilde{B}_{-1}=g(\tilde{A}_{-1})$.
	Then $\tilde{A}_{-1}\subseteq A_{-1}$, 	$\tilde{B}_{-1}\subseteq B_{-1}\subseteq g^{-1}(x_0)$ and $B_{-2}\subseteq
	g^{-1}(A_{-1})$. 
	Also, since $f(x_0)\ne x_0$, we have $y_0\neq x_0$, $x_0 \notin A_{-1}$ and $x_0\notin B_{-1}$.  
	
	\noindent {\bf Case (i):}  	Since $x_0\notin A_{-1}$ and $\tilde{A}_{-1}\subseteq A_{-1}$, we have $\#f^{-1}(x)\leq 1$ for all $x\in \tilde{A}_{-1}$. Therefore, as $A_{-2}\subseteq \bigcup_{x\in \tilde{A}_{-1}}f^{-1}(x)$ and $\#A_{-2}>1$, it follows that $\#\tilde{A}_{-1}>1$. 
	
	On the other hand, since $y_0\ne x_0$, we have $\#f^{-1}(y_0)\le 1$. This implies that $\#\tilde{B}_{-1}=1$, because $\tilde{B}_{-1}\ne \emptyset$ and $\tilde{B}_{-1}\subseteq B_{-1}\subseteq f^{-1}(y_0)$.
	Therefore, as $x_0\notin \tilde{B}_{-1}$, we get that $\#f^{-1}(\tilde{B}_{-1})=1$. Consequently, $\#B_{-2}=1$, implying that $\#\tilde{A}_{-1}=\#f(A_{-2})=
	\#g(B_{-2})=1$. 	This contradicts an earlier
	conclusion. Hence   $f$ has no square roots in $\mathcal{F}(X)$.
	
	\noindent  {\bf Case (ii):} Since $x_0\notin A_{-1}$ and $\tilde{A}_{-1}\subseteq A_{-1}$, we see that $f^{-1}(x)$ is finite for all $x\in \tilde{A}_{-1}$. Therefore, as $A_{-2}\subseteq \bigcup_{x\in \tilde{A}_{-1}}f^{-1}(x)$ and $A_{-2}$ is infinite, it follows that $\tilde{A}_{-1}$ is infinite. 
	
	On the other hand, 
	since $y_0\ne x_0$, we have that $f^{-1}(y_0)$ is finite. This implies that $\tilde{B}_{-1}$ is finite, because $\tilde{B}_{-1} \subseteq B_{-1}\subseteq f^{-1}(y_0)$. Also, 
	as $x_0\notin B_{-1}$ and $\tilde{B}_{-1} \subseteq B_{-1}$, we see that $f^{-1}(x)$ is finite for all $x\in \tilde{B}_{-1}$.
	Then it follows that $f^{-1}(\tilde{B}_{-1})$ is finite. Consequently, $B_{-2}$ is finite, implying that $\tilde{A}_{-1}=f(A_{-2})=
	g(B_{-2})$ is finite. 	This contradicts the conclusion of the previous paragraph.
	Hence   $f$ has no square roots in $\mathcal{F}(X)$.

	\noindent {\bf Case (iii):} The proof of  Case (ii) repeatedly uses the  fact that a
	finite union of finite sets is finite. The proof for this case is similar, using the result that a countable union of
	countable sets is countable.
\end{proof}

It is worth noting that $f^{-2}$ in the previous theorem cannot be
replaced by $f^{-1}$, as seen from the following.
\begin{example}
	Consider the continuous map $f:[0,1]\to [0,1]$ defined by
	\begin{eqnarray*}
		f(x)= \left\{ \begin{array}{cll}
			\frac{3}{4}& \text{if}&0\leq x\leq \frac{1}{2},\\
			\frac{5}{8}+\frac{x}{4}& \text{if}& \frac{1}{2}< x\leq 1.\end{array}\right.
	\end{eqnarray*}
	Then $f(\frac{3}{4})\neq \frac{3}{4}$, $f^{-1}(\frac{3}{4})$ is
	uncountable and $f^{-1}(x)$ is finite for all $x\neq \frac{3}{4}.$
	However, $f$ has a square root
	\begin{eqnarray*}
		g(x)= \left\{ \begin{array}{cll}
			1& \text{if}&0\leq x\leq \frac{1}{2},\\
			\frac{5}{4}-\frac{x}{2}&\text{if}& \frac{1}{2}< x\leq 1\end{array}\right.
	\end{eqnarray*}
	on $[0,1]$ that is even continuous.
\end{example}

The   above results are given for an arbitrary set $X$. In the rest
of the section and those that follow, we consider $X$ to be a
topological space or a specific metric space and study continuous
roots of continuous self-maps on $X$.  It is useful to
have some notations in this context. Recall that for a topological space $X$, the
space of all continuous self-maps of $X$ is denoted by $\mathcal
{C}(X)$. For each $A\subseteq X$, let $A^0$ denote the interior of
$A$, $\overline{A}$ the closure of $A$, and $\partial A$ the
boundary of $A$.  In particular, if $X$ is a metric space equipped with
metric $d$, then for each $x\in X$ and $\epsilon
>0$, let $B_{\epsilon }(x):=\{y\in X: d(y,x)<\epsilon \}$, the open 
ball in $X$ around $x$ of radius $\epsilon$. For any index set $\Lambda$,
let $S_\Lambda$  denote the group of all permutations (bijections)  on
$\Lambda$, and for convenience, we denote $S_\Lambda$ by $S_k$ when 
$\Lambda=\{1,2,\ldots, k\}$.

We motivate the next theorem through two examples. The
map $f_1(x)=1-x$ on the unit interval $[0,1]$ as an element of ${\mathcal
	C}([0,1])$, with $[0,1]$ in the usual metric induced by $|\cdot|$, has no continuous square roots on $[0,1]$ (see \cite[pp.425-426]{kuczma1990}).
On the other hand,  the map $f_2(x,y) = (1-x, 1-y)$ on $[0,1]\times [0,1]$ as an element of ${\mathcal C}([0,1]\times [0,1])$, where $[0,1]\times [0,1]$ has the metric induced by the norm  $\|(x,y)\|_\infty:=\max\{|x|,|y|\}$, has
a continuous square root  $g(x,y)= (y,1-x)$ on $[0,1]\times [0,1]$.
The reason for these contrasting conclusions can be detected by observing the fixed points of
these maps.

Let $f$ be a continuous self-map on a topological space $X$ and $Y$ a non-empty subset of $X$ invariant under
$f$, i.e., $f(Y)\subseteq Y$. Then it is clear that if $x$ and $x'$
are path-connected in $Y$, then so are $f(x)$ and $f(x')$. Therefore, if $Y=\bigcup _{\alpha\in \Lambda}Y_\alpha$
is the decomposition of $Y$ into its path components for some index set $\Lambda$, then there exists a unique map $\sigma _{f,Y}:
\Lambda \to \Lambda$ such that  $f(x)\in Y_\beta$ whenever $x\in Y_\alpha$
and $\sigma _{f,Y}(\alpha)=\beta$ (Note that $y$ and $y'$  are path-connected in $Y$ if and only if there exists an $\alpha\in \Lambda$ such that
$y,y'\in Y_\alpha$). We call $\sigma _{f,Y}$ as the
map {\em induced} by the map $f$ and the invariant set
$Y$.
Let $F(f):=\{x\in X: f(x)=x\}$, the set of all fixed points of
$f$ in $X$, and $E(f):= R(f)\setminus F(f)$, the complement of $F(f)$ in $R(f)$.


\begin{theorem}\label{connected components}
	Let $n\in \mathbb{N}$ and $f\in {\mathcal C}(X)$ be such that $E(f)$ is invariant under $f$.  
	If $f=g^n$
	for some $g\in {\mathcal C}(X)$, then $E(f)$ is invariant under $g$ and the induced maps satisfy that
	$\sigma _{f,E(f)} = \sigma _{g, E(f)}^n$. In other words, if $\sigma _{f,
		E(f)}$ has no $n^{\rm th}$ roots, then $f$ also has no
	continuous $n^{\rm th}$ roots on $X$.
\end{theorem}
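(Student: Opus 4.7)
The plan is to first verify that $E(f)$ is $g$-invariant, and then lift the identity $f=g^n$ to the induced maps on path components. For the invariance, I would take $y\in E(f)=R(f)\setminus F(f)$, write $y=f(z)$ for some $z\in X$, and compute $g(y)=g^{n+1}(z)=f(g(z))\in R(f)$, which handles the range part.

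The nontrivial step is to show $g(y)\notin F(f)$. Here the key observation is that $f$ and $g$ commute, since $f=g^n$. Supposing for contradiction that $f(g(y))=g(y)$, commutativity gives $g(f(y))=g(y)$; applying $g^{n-1}$ to both sides (which is meaningful since $n\ge 1$) then yields $f(f(y))=f(y)$, so $f(y)\in F(f)$. Combined with the trivial $f(y)\in R(f)$ this forces $f(y)\notin E(f)$, contradicting the hypothesis that $E(f)$ is $f$-invariant (which places $f(y)$ in $E(f)$ because $y\in E(f)$). Hence $g(y)\in E(f)$, and $E(f)$ is $g$-invariant.

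Once invariance is established, both $\sigma_{f,E(f)}$ and $\sigma_{g,E(f)}$ are defined as self-maps of the same index set $\Lambda$ of path components of $E(f)$, and the equality $\sigma_{f,E(f)}=\sigma_{g,E(f)}^n$ follows by straightforward iteration: for any $\alpha\in\Lambda$ and any $x\in Y_\alpha$, an easy induction on $k$ gives $g^k(x)\in Y_{\sigma_{g,E(f)}^k(\alpha)}$, so taking $k=n$ places $f(x)=g^n(x)$ simultaneously in $Y_{\sigma_{g,E(f)}^n(\alpha)}$ and in $Y_{\sigma_{f,E(f)}(\alpha)}$; disjointness of path components then forces these two indices to coincide. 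The closing assertion (that $f$ has no continuous $n^{\textrm{th}}$ root whenever $\sigma_{f,E(f)}$ has no $n^{\textrm{th}}$ root in the monoid of self-maps of $\Lambda$) is just the immediate contrapositive of the identity we have established.

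The main obstacle I expect is the short commutativity manoeuvre used to rule out $g(y)\in F(f)$: it is the only place where the hypothesis \emph{$E(f)$ is $f$-invariant} is actually consumed, and it is what forbids a fixed point of $f$ from appearing on the $g$-orbit of a non-fixed image. The path component bookkeeping is essentially automatic once that step is in place.
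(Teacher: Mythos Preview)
Your proof is correct and follows essentially the same route as the paper: you show $g(y)\in R(f)$ via $g(y)=f(g(z))$, then rule out $g(y)\in F(f)$ by using commutativity of $f$ and $g$ to deduce $f^2(y)=f(y)$, contradicting $f$-invariance of $E(f)$; the paper does exactly this (with $x,x'$ in place of your $y,z$) and likewise dismisses the equality $\sigma_{f,E(f)}=\sigma_{g,E(f)}^n$ as immediate from the definitions. Your write-up is in fact slightly more explicit than the paper's on the path-component bookkeeping.
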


\begin{proof}
	Let $E(f)=\bigcup _{\alpha\in \Lambda}Y_\alpha$
	be the decomposition of $E(f)$ into its path components for some index set $\Lambda$.
	The case $n=1$ is trivial. So, let $n>1$, and consider any $x\in E(f)$. Then $x=f(x')$ for some $x'\in X$. Clearly, $g(x)\in R(f)$, because $g(x)= g(f(x'))= g(g^n(x'))=g^n(g(x'))= f(g(x'))$. Also, if $g(x)\in F(f)$, then
	$g(x)=f(g(x))= g^n(g(x))=g(f(x))$, implying that  $f(x)=g^n(x)=g^n(f(x))=f^2(x)$, i.e., $f(x)$ is a fixed point of
	$f$. This contradicts the hypothesis that $E(f)$ is invariant under $f$. Therefore $g(x)\in E(f)$.
	
	%
	The equality $\sigma _{f,Y}= \sigma _{g, Y}^n$ follows from the
	definitions  of $\sigma_{f,E(f)}$ and $\sigma_{g,E(f)}$, and the relation $f=g^n.$
\end{proof}
In the remainder of this section, we illustrate Theorem \ref{connected components} through some simple
examples.  Remark that $\sigma \in S_k$, the group
of permutation of $k$ elements,  has a square root if and only if
the number of even cycles of $\sigma$ is even. More generally,
$\sigma \in S_k$ has a $n^{\rm th}$ root if and only if for every $m
= 1,2, \dots$ it is true that the number of $m$-cycles that $\sigma$
has is a multiple of $((m,n))$, where
$((m,n)):=\prod_{p|m}^{}p^{e(p,n)}$, $e(p,n)$ being the highest
power of $p$ dividing $n$ (see \cite[pp.155-158]{wilf2006}).


\begin{corollary}\label{C1}
	Let $a<b$ be real numbers and $f\in \mathcal{C}([a,b])$ be such
	that $f|_{R(f)}$ is a non-constant strictly decreasing map. Then $f$
	has no iterative roots of even orders in $\mathcal{C}([a,b])$.
\end{corollary}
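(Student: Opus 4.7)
The plan is to apply Theorem~\ref{connected components} with $n$ an arbitrary even positive integer, reducing the non-existence of a continuous $n$-th root of $f$ to the non-existence of an $n$-th root of an explicit transposition inside $S_2$.

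First I would pin down $R(f)$ and $F(f)$. Since $f\in \mathcal{C}([a,b])$, the range $R(f)$ is a compact connected subset of $\mathbb{R}$, hence a closed interval $[c,d]$; non-constancy of $f|_{R(f)}$ forces $c<d$. Because $f|_{R(f)}:[c,d]\to[c,d]$ is continuous and strictly decreasing, the intermediate value theorem applied to $x\mapsto f(x)-x$ yields a unique fixed point $p$, and strict monotonicity together with $f([c,d])\subseteq[c,d]$ rules out $p=c$ or $p=d$ (either would force some image to leave $[c,d]$), so $p\in(c,d)$. Any fixed point of $f$ lies in $R(f)$, so $F(f)=\{p\}$ and $E(f)=[c,p)\cup(p,d]$. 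Invariance of $E(f)$ under $f$ is immediate: for $x\in E(f)\subseteq R(f)$ we have $f(x)\in R(f)$, and injectivity of $f|_{R(f)}$ gives $f(x)\ne f(p)=p$, so $f(x)\in E(f)$.

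Next I would read off $\sigma_{f,E(f)}$. The two path components of $E(f)$ are $Y_1=[c,p)$ and $Y_2=(p,d]$; strict decrease on $[c,d]$ sends $Y_1$ into $(p,d]=Y_2$ and $Y_2$ into $[c,p)=Y_1$, so $\sigma_{f,E(f)}$ is the transposition $(1\,2)\in S_2$. For any even $n$ and any $\tau\in S_2$, $\tau^n=\mathrm{id}\ne(1\,2)$, so this transposition admits no $n$-th root in $S_2$. Theorem~\ref{connected components} then rules out any continuous $n$-th root of $f$ in $\mathcal{C}([a,b])$.

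The only slightly delicate step is verifying that the unique fixed point $p$ lies strictly inside $R(f)$, since otherwise one of $Y_1,Y_2$ would be empty and $\sigma_{f,E(f)}$ would not be the non-trivial transposition; once this is in hand, the rest is a clean application of the framework developed earlier in Section~\ref{sec2}.
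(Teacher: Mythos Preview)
Your proposal is correct and follows essentially the same route as the paper's proof: identify $R(f)=[c,d]$, locate the unique interior fixed point $p$, decompose $E(f)=[c,p)\cup(p,d]$, observe that $\sigma_{f,E(f)}$ is the transposition in $S_2$, and invoke Theorem~\ref{connected components}. Your write-up is in fact more careful than the paper's, since you explicitly verify that $E(f)$ is $f$-invariant (a hypothesis of Theorem~\ref{connected components} that the paper leaves implicit) and justify why $p\in(c,d)$; one tiny refinement is that $\sigma_{g,E(f)}$ is a priori only a self-map of the two-element index set rather than a permutation, but the two non-bijective self-maps are idempotent and hence cannot have $n$-th power equal to the transposition either, so the conclusion stands.
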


\begin{proof}
	Since $f:[a,b]\to [a,b]$ is a non-constant strictly decreasing continuous map, we see that $R(f)$ is a compact interval $[c,d]$ in $\mathbb{R}$ for some $c<d$ in $[a,b]$ and $f$ has a unique
	fixed point $p$ in $(c, d)$.     Also, $E(f)=[c,p)\cup (p, d]$ is the decomposition of $E(f)$ into path
	components and  $\sigma :=\sigma _{f, E(f)}$ is the transposition of the two point set $S_2$.  Since $\sigma$ has no iterative roots of even orders in $S_2$, it follows from Theorem \ref{connected components}
	that $f$ has no iterative roots of even orders in $\mathcal{C}([a,b])$.
\end{proof}

\begin{example}
	Consider $[0,1]\times [0,1]$ in the topology induced by the norm $\|(x,y)\|_\infty:=\max\{|x|,|y|\}$, and let $f_1, f_2$ be the continuous maps on $[0,1]\times [0,1]$
	defined by $f_1(x,y)=(1-x, 0)$ and $f_2(x,y)=(y,x)$. Then Theorem
	\ref{connected components} is applicable to $f_1$ and $f_2$ with
	\begin{eqnarray*}E(f_1)&=& \left\{(x,0): 0\le x<\frac{1}{2}\right\}\cup
		\left\{(x,0):\frac{1}{2}<x\le 1\right\};\\
		E(f_2)&=& \left\{ (x,y): 0\leq x<y\leq 1\right\}\cup \left\{(x,y): 0\leq y<x\leq
		1\right\}.\end{eqnarray*}
	It follows that both $f_1$ and $f_2$ have no iterative roots of even orders in
	$\mathcal{C}([0,1]\times [0,1])$.
\end{example}

%


\section{Continuous functions on the unit cube of ${\mathbb R}^m$}\label{sec3}

As seen in the Introduction, plentiful  literature is available on iterative square roots of functions on compact real intervals. In this section, we extend some of the results in \cite{Humke-Laczkovich} to the {\it unit cube}
\begin{eqnarray*}
	I^m :=\{ (x_1, x_2, \ldots , x_m)\in {\mathbb R}^m: x_i\in I~\text{for}~ 1\leq i\leq m\}
\end{eqnarray*}
considered in the metric induced by the norm
\begin{eqnarray}\label{norm}
\|(x_1, x_2,\ldots,x_m)\|_\infty=\max\{|x_i|: 1\le i\le m\},
\end{eqnarray}
where $I:=[0,1]$.
In studying iterative roots of continuous maps on intervals,
one observes that it becomes essential to look at piecewise linear
maps. Here we study their analogues in higher dimensions. 
It is convenient to have the following elementary notions and notations from the theory of simplicial complexes to present our proofs.

As defined in \cite{deo}, a set $S=\{x_0, x_1, \ldots, x_k\}$ in $\mathbb{R}^m$, where $k\ge 1$, is said to be {\it geometrically independent} if the set $\{x_1-x_0, x_2-x_0, \dots, x_k-x_0\}$ is linearly independent  in $\mathbb{R}^m$. Equivalently, $S$ is geometrically independent if and only if for arbitrary reals $\alpha_i$, the equalities $\sum_{i=0}^{k}\alpha_ix_i=0$ and $\sum_{i=0}^{k}\alpha_i=0$ imply that $\alpha_i=0$ for all $0\le i\le k$.
%
A set having only one point is assumed to be geometrically independent. If $S$ is geometrically independent, then  there exists a unique $k$-dimensional hyperplane which passes through all the points of $S$ and each point $x$ on this hyperplane
can be expressed uniquely as
$x=\sum_{i=0}^{k}\alpha_ix_i$ such that $\sum_{i=0}^{k}\alpha_i=1$. The real numbers $\alpha_0, \alpha_1, \ldots, \alpha_k$, which are uniquely determined by $S$, are
called the {\it barycentric coordinates} of the point $x$ with respect to the set $S$.

Let $S=\{x_0, x_1, \ldots, x_k\}$ be a geometrically independent set
in $\mathbb{R}^m$, where $0\leq k\leq m$. Then the {\it $k$-dimensional
	geometric simplex} or {\it $k$-simplex} spanned by $S$, denoted by
$\sigma$, is defined as the set of all points $x\in \mathbb{R}^m$
such that $x=\sum_{i=0}^{k}\alpha_ix_i$, where
$\sum_{i=0}^{k}\alpha_i=1$ and $\alpha_i\ge 0$ for all $0\le i \le k$. Equivalently, $\sigma$ is the convex hull of $S$.
The points $x_0, x_1, \ldots, x_k$ are called the {\it vertices} of $\sigma$, and we usually write $\sigma=\langle x_0, x_1, \ldots, x_k\rangle$ to indicate that $\sigma$ is the $k$-simplex with vertices $x_0, x_1, \ldots, x_k$.
If $\sigma=\langle x_0, x_1, \ldots, x_k\rangle$ is a $k$-simplex, then the set of
those points of $\sigma$ for which all barycentric coordinates are strictly positive, is
called the {\it open $k$-simplex} $\sigma$ (or the {\it interior} of $\sigma$), and is denoted by $\sigma^0$.
If $\sigma$ is a $p$-simplex and $\tau$ is a $q$-simplex in $\mathbb{R}^m$ such that $p\le q\le m$, then we say that $\sigma$ is a {\it $p$-dimensional face} (or simply a {\it $p$-simplex}) {\it of $\tau$} if each vertex of $\sigma$ is also a vertex of $\tau$.  If $\sigma$ is a face of $\tau$ with $p<q$, then $\sigma$ is called a {\it proper face of $\tau$}.  Any $0$-dimensional face of a simplex is simply a vertex of the simplex and  a $1$-dimensional
face of a simplex is usually called an {\it edge} of that simplex.

A finite (resp. countable) {\it simplicial complex} $\mathcal{K}$ is a finite (resp. countable) collection of simplices of $\mathbb{R}^m$ satisfying the following
conditions: {\bf (i)} If $\sigma \in \mathcal{K}$, then all the faces of $\sigma$ are in $\mathcal{K}$; {\bf (ii)} If $\sigma, \tau \in \mathcal{K}$, then either $\sigma \cap \tau =\emptyset$ or $\sigma\cap \tau$ is a common face of both $\sigma$ and $\tau$.
Let $\mathcal{K}$ be a finite simplicial complex and
$|\mathcal{K}|=\bigcup_{\sigma \in \mathcal{K}}\sigma$ be the union
of all simplices of $\mathcal{K}$. Then $|\mathcal{K}|$ is a
topological space with the topology inherited from $\mathbb{R}^m$.
The space $|\mathcal{K}|$ is called the {\it geometric carrier} of
$\mathcal{K}$. A topological subspace of $\mathbb{R}^m$, which is
the geometric carrier of some finite simplicial complex, is called a
{\it rectilinear polyhedron}. A topological space $X$ is said to be
a {\it polyhedron} if there exists a finite simplicial complex
$\mathcal{K}$ such that $|\mathcal{K}|$ is homeomorphic to $X$. In
this case, the space $X$ is said to be {\it triangulable} and
$\mathcal{K}$ is called a {\it triangulation} of $X$. A space $X$ is
said to be {\it countably triangulable} if there exists a countable
triangulation of $X$, i.e.,  there exists a countable simplicial
complex $\mathcal{K}$ of simplices of $\mathbb{R}^m$  such that its
geometric carrier $|\mathcal{K}|$ is homeomorphic to $X$. Further,
we do not distinguish  between $X$ and its geometric carrier
$|\mathcal{K}|$ whenever $X=|\mathcal{K}|$ for some (finite or
countable) triangulation $\mathcal{K}$ of $X$, because in that case
we always consider the identity map {\rm id} to be the homeomorphism
between $|\mathcal{K}|$ and $X$.

If $\sigma=\langle x_0, x_1, \ldots, x_k\rangle$ is a $k$-simplex in $\mathbb{R}^m$, then the point $\sum_{i=0}^{k}\frac{1}{k+1}x_i$ is called the {\it barycentre} of $\sigma$ and is denoted by $\dot{\sigma}$. In other words, barycentre of $\sigma$ is that point of $\sigma$ whose barycentric coordinates, with respect to each of the vertices of $\sigma$, are equal. Given a simplicial complex $\mathcal{K}$, let $\mathcal{K}^{(1)}$ be a simplicial complex whose vertices are barycentres of all simplices of $\mathcal{K}$, and for any distinct simplices $\sigma_1, \sigma_2, \ldots, \sigma_k$ of $\mathcal{K}$, $\langle \dot{\sigma_1}, \dot{\sigma_2}, \ldots, \dot{\sigma_k}\rangle$ is a simplex of $\mathcal{K}^{(1)}$ if and only if $\sigma_i$ is a face of $\sigma_{i+1}$ for $i=0, 1,\ldots, k-1$. Then $\mathcal{K}^{(1)}$ is a simplicial complex and is called the {\it first barycentric subdivision} of $\mathcal{K}$. By induction, we define $l^{\rm th}$ barycentric subdivision $\mathcal{K}^{(l)}$ of $\mathcal{K}$ to be the first barycentric subdivision of $\mathcal{K}^{(l-1)}$ for each $l>1$. We also put $\mathcal{K}^{(0)}=\mathcal{K}$ for convenience. For a simplicial complex $\mathcal{K}$, we define the {\it mesh} of $\mathcal{K}$, denoted by $mesh(\mathcal{K})$, as
\begin{eqnarray*}
	mesh(\mathcal{K})=\max\{diam(\sigma): \sigma ~\text{is a simplex of}~\mathcal{K}\},
\end{eqnarray*}
where $diam(\sigma)$ denote the diameter of $\sigma$. Then $\lim\limits_{l\to \infty}mesh(\mathcal{K}^{(l)})=0$ for every non-empty simplicial complex $\mathcal{K}$.

An $f\in \mathcal{F}(\mathbb{R}^m)$ is said to  be {\it affine
	linear} if $f(\cdot)=T(\cdot)+z$ for some linear map $T\in
\mathcal{F}(\mathbb{R}^m)$ and  $z\in \mathbb{R}^m$. Clearly, each
affine linear map on $\mathbb{R}^m$ is continuous. An affine linear
map $f$ is said to be of {\it full rank} if the associated linear
map $T$ is invertible. Note that full rank affine
linear maps are open maps, i.e., they map open sets to open sets.
They also map geometrically independent sets to geometrically
independent sets. We sew affine linear maps on simplices piece by
piece to get many continuous maps on $\mathbb{R}^m$.

An ${\mathbb R}^m$-valued function $f$ on a subset $X$ of ${\mathbb
	R}^m$ is said to be {\it piecewise affine linear} (resp. {\it
	countably piecewise affine linear}) if $X=|\mathcal{K}|$ for some
finite (resp. countable) simplicial complex $\mathcal{K}$ of
simplices of $\mathbb{R}^m$  and $f|_\sigma$ is the restriction  of some affine linear map $\phi_\sigma$ on $\mathbb{R}^m$ to
$\sigma $
for every $\sigma \in \mathcal{K}$. In this case we say that $f$ is
supported by the simplicial complex ${\mathcal K}.$

Here is a basic construction of piecewise affine linear maps that
we use repetitively.  Let $\mathcal{K}$ be a simplicial
complex in $\mathbb{R}^m$ and $U=\{x_0, x_1, \ldots \}$
be its set of vertices ordered in some arbitrary way. Let $V=\{y_0, y_1, \ldots\}$ be an ordered set of points in
${\mathbb R}^m$. Then we can define a  ${\mathbb R}^m$-valued function $f_{U,V}$ on $|\mathcal{K}|$  satisfying
$f(x_j)=y_j$ for all $j\in \mathbb{Z}_+$
as follows: If $x\in |\mathcal{K}|$, then $x$ is in the interior of an
unique $k$-simplex $\langle x_{i_0}, x_{i_1}, \dots, x_{i_k}\rangle$ in $\mathcal{K}$ for some $0\le k\le m$.
We write $x=\sum_{j=0}^{k}\alpha_{i_j}x_{i_j}$ with $\sum_{j=0}^{k}\alpha_{i_j}=1$ and define
\begin{eqnarray}\label{f_0}
f_{U,V}(x)=\sum_{j=0}^{k}\alpha_{i_j}y_{i_j}.
\end{eqnarray}
Clearly, $f_{U,V}$ is a 
continuous piecewise affine linear map on $|\mathcal{K}|$ satisfying that
$f(x_j)=y_j$ for every $j\in \mathbb{Z}_+$. Furthermore, if $|{\mathcal K}|$ is convex and
$y_j\in |\mathcal {K}|$ for every $j$, then
$f_{U,V}$ is a self-map of $|{\mathcal K}|.$
\begin{lemma}\label{L1}
	Let ${\mathcal K}$ and $f_{U,V}$ be as described above. \begin{description}
		\item[{\rm (i)}] If $\sigma=\langle x_{i_0}, x_{i_1}, \dots, x_{i_k}\rangle$ is a $k$-simplex in $\mathcal{K}$ and
		$\{y_{i_0}, y_{i_1}, \dots, y_{i_k}\}$  is geometrically independent in ${\mathbb R}^m$, then  $f_{U,V}|_\sigma$ is injective and $f(\sigma)^0=f(\sigma^0)$.
		\item[{\rm (ii)}] If  $V=\{y_0, y_1, \ldots\}$ and $W=\{z_0, z_1, \ldots\}$ are ordered
		sets in  $|\mathcal{K}|$ such that $\|y_j-z_j\|_\infty\le \eta$ for all $j\in
		\mathbb{Z}_+$, where $\eta>0$, then
		$\|f_{U,V}(x)-f_{U,W}(x)\|_\infty\le \eta$ for all $x\in
		|\mathcal{K}|$.
	\end{description}
\end{lemma}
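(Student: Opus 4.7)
The plan is to exploit the fact that $f_{U,V}|_\sigma$ is the unique affine map sending the vertices $x_{i_j}$ of $\sigma$ to $y_{i_j}$, so that in terms of barycentric coordinates it acts as the identity on the coefficients. Both claims then reduce to standard properties of barycentric coordinates combined with the defining formula \eqref{f_0}.

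For part (i), suppose $x,x'\in\sigma$ satisfy $f_{U,V}(x)=f_{U,V}(x')$ and write them in barycentric form $x=\sum_{j=0}^{k}\alpha_{i_j}x_{i_j}$ and $x'=\sum_{j=0}^{k}\alpha'_{i_j}x_{i_j}$ with $\sum\alpha_{i_j}=\sum\alpha'_{i_j}=1$. Then $\sum\alpha_{i_j}y_{i_j}=\sum\alpha'_{i_j}y_{i_j}$, and since $\{y_{i_0},\dots,y_{i_k}\}$ is geometrically independent, uniqueness of barycentric coordinates forces $\alpha_{i_j}=\alpha'_{i_j}$ for every $j$, giving $x=x'$. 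For the equality $f(\sigma)^0=f(\sigma^0)$, note first that since affine maps send convex hulls to convex hulls, $f(\sigma)$ is the convex hull of $\{y_{i_0},\dots,y_{i_k}\}$, which by geometric independence is an honest $k$-simplex. Its open simplex $f(\sigma)^0$ consists of points whose barycentric coordinates with respect to $\{y_{i_j}\}$ are all strictly positive, whereas $f(\sigma^0)$ is the set of $\sum\alpha_{i_j}y_{i_j}$ with every $\alpha_{i_j}>0$ and $\sum\alpha_{i_j}=1$. Uniqueness of barycentric coordinates again shows these two sets coincide.

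For part (ii), take any $x\in|\mathcal{K}|$. Then $x$ lies in the interior of a unique simplex $\langle x_{i_0},\dots,x_{i_k}\rangle$ and admits a representation $x=\sum_{j=0}^{k}\alpha_{i_j}x_{i_j}$ with $\alpha_{i_j}\ge 0$ and $\sum\alpha_{i_j}=1$. Plugging into \eqref{f_0} gives
\[
f_{U,V}(x)-f_{U,W}(x)=\sum_{j=0}^{k}\alpha_{i_j}\bigl(y_{i_j}-z_{i_j}\bigr),
\]
so the triangle inequality for $\|\cdot\|_\infty$ together with $\|y_{i_j}-z_{i_j}\|_\infty\le\eta$ yields $\|f_{U,V}(x)-f_{U,W}(x)\|_\infty\le\sum_{j=0}^{k}\alpha_{i_j}\eta=\eta$.

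There is no substantive obstacle here; the lemma simply records that the construction \eqref{f_0} is the affine interpolant of prescribed vertex data. Geometric independence of the image vertices transfers to injectivity and to preservation of the open-simplex structure on each simplex, while $\eta$-closeness of the prescribed vertex values transfers uniformly to $\eta$-closeness of the interpolants. The only mild subtlety to keep in mind is that $f(\sigma)^0$ denotes the open simplex (all barycentric coordinates strictly positive) rather than the topological interior in $\mathbb{R}^m$, but this is precisely the notion characterised by barycentric coordinates in the argument above.
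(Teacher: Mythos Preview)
Your proof is correct and follows the same approach as the paper; in fact the paper simply declares part (i) ``trivial'' and for part (ii) gives exactly your triangle-inequality computation, so you have supplied the details the authors chose to omit.
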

\begin{proof}
	The result (i) is trivial.  For each $x\in |\mathcal{K}|$, by \eqref{f_0} we have
	\begin{eqnarray*}
		\|f_{U,V}(x)-f_{U,W}(x)\|_\infty\le \sum_{j=0}^{k}\alpha_{i_j}\|y_{i_j}-z_{i_j}\|_\infty\le \eta,
	\end{eqnarray*}
	proving result  (ii).
\end{proof}

\begin{lemma}
	If  $f\in \mathcal{F}(\mathbb{R}^m)$ is an affine linear map such that $f|_S=0$ for a geometrically independent set $S=\{x_0, x_1, \ldots, x_m\}$ of $\mathbb{R}^m$, then $f=0$ on $\mathbb{R}^m$.
\end{lemma}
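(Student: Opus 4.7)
The plan is to exploit the definition of affine linearity and the fact that a geometrically independent set of size $m+1$ in $\mathbb{R}^m$ gives rise to a basis of $\mathbb{R}^m$ after translation.

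First, I would write $f(x) = T(x) + z$ for a linear map $T$ on $\mathbb{R}^m$ and a vector $z \in \mathbb{R}^m$, as in the definition of affine linear preceding the lemma. The assumption $f(x_0) = 0$ immediately gives $z = -T(x_0)$, while $f(x_i) = 0$ for $1 \le i \le m$ gives $T(x_i) = -z = T(x_0)$, hence $T(x_i - x_0) = 0$ for every $1 \le i \le m$.

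Next, I would invoke the hypothesis of geometric independence of $S = \{x_0, x_1, \ldots, x_m\}$: by the definition recalled in the paper, this means $\{x_1 - x_0, x_2 - x_0, \ldots, x_m - x_0\}$ is linearly independent in $\mathbb{R}^m$. Since this set has $m$ elements and $\mathbb{R}^m$ is $m$-dimensional, it is a basis. A linear map vanishing on a basis must be identically zero, so $T = 0$, which in turn forces $z = -T(x_0) = 0$, and therefore $f \equiv 0$ on $\mathbb{R}^m$.

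There is no significant obstacle here; the argument is a direct unwinding of definitions, relying only on the equivalence between geometric independence of an $(m+1)$-point set in $\mathbb{R}^m$ and the basis property of the translated vectors. The only point that warrants a line of care is checking that both the linear part $T$ and the translation $z$ must vanish, rather than stopping after showing $T = 0$.
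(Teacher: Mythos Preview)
Your proof is correct and follows essentially the same approach as the paper: write $f = T + z$, use geometric independence of $S$ to obtain the basis $\{x_1 - x_0, \ldots, x_m - x_0\}$, and deduce that both $T$ and $z$ vanish. The only cosmetic difference is that the paper computes $f(x)$ for an arbitrary $x$ via the basis expansion to see $f \equiv z$ before concluding $z = 0$, whereas you more directly show $T$ vanishes on the basis and then read off $z = -T(x_0) = 0$; the underlying argument is the same.
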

\begin{proof}
	Let $f(\cdot )=T(\cdot ) +z$ for some $z\in \mathbb{R}^m$ and a
	linear map $T\in \mathcal{F}(\mathbb{R}^m)$. Since $S$ is
	geometrically independent, we have $B=\{x_1-x_0, x_2-x_0, \dots,
	x_m-x_0\}$ is linearly independent, and therefore it is a Hamel
	basis for $\mathbb{R}^m$. To prove $f= 0$, consider any $x\in
	\mathbb{R}^m$. Then $x=\sum_{j=1}^{m}\alpha_j(x_j-x_0)$ for some
	$\alpha_1, \alpha_2, \ldots, \alpha_m\in \mathbb{R}$, implying that
	\begin{eqnarray*}
		f(x)&=&T\left(\sum_{j=1}^{m}\alpha_j(x_j-x_0)\right)+z\\
		&=&\sum_{j=1}^{m}\alpha_jT(x_j)-\sum_{j=1}^{m}\alpha_jT(x_0)+z\\
		&=&\sum_{j=1}^{m}\alpha_j(-z)-\sum_{j=1}^{m}\alpha_j(-z)+z=z.
	\end{eqnarray*}
	In particular, $f(x_0)=z$, and therefore we get that  $z=0$. Hence $f=0$ on $\mathbb{R}^m$.
\end{proof}

Using the above lemma, we can  deduce the following result on uniqueness of affine linear maps.

\begin{corollary}\label{C2}
	If $f_1, f_2 \in \mathcal{F}(\mathbb{R}^m)$ are affine linear maps such that $f_1|_\sigma= f_2|_\sigma$ for
	some $m$-simplex $\sigma\in \mathcal{K}$, then $f_1=f_2$ on
	$\mathbb{R}^m$. Consequently, if $f_1|_U=f_2|_U$ for some open
	subset $U$ of $\mathbb {R}^m$, then $f_1=f_2$ on $\mathbb{R}^m.$
\end{corollary}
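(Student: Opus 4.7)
The plan is to reduce both assertions to the preceding lemma by considering the difference $h := f_1 - f_2$, which is again affine linear on $\mathbb{R}^m$ since the affine linear maps form a vector space.

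For the first assertion, I would use the fact that an $m$-simplex $\sigma = \langle x_0, x_1, \ldots, x_m\rangle$ has, by definition, a geometrically independent vertex set of cardinality $m+1$. The hypothesis $f_1|_\sigma = f_2|_\sigma$ gives in particular $h(x_j) = 0$ for $j=0,1,\ldots,m$, so the previous lemma applies with $S = \{x_0, x_1, \ldots, x_m\}$ and forces $h \equiv 0$ on $\mathbb{R}^m$, i.e.\ $f_1 = f_2$.

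For the second assertion, the task reduces to exhibiting an $m$-simplex inside $U$. Since $U$ is a nonempty open subset of $\mathbb{R}^m$, it contains an open ball $B_\epsilon(x)$ for some $x \in U$ and $\epsilon > 0$. Inside this ball one can easily choose $m+1$ geometrically independent points: for example, $x$ together with $x + \tfrac{\epsilon}{2} e_i$ for $i=1,\ldots,m$, where $e_1, \ldots, e_m$ is the standard basis of $\mathbb{R}^m$; the vectors $\tfrac{\epsilon}{2} e_i$ are linearly independent, so geometric independence holds. The convex hull of these points is an $m$-simplex $\sigma \subseteq B_\epsilon(x) \subseteq U$, and $f_1|_\sigma = f_2|_\sigma$ by hypothesis, so the first part yields $f_1 = f_2$ on $\mathbb{R}^m$.

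There is no real obstacle here; both steps are straightforward consequences of the lemma once one sees that the hypothesis supplies (directly in the first case, after a trivial geometric construction in the second case) the required geometrically independent set of $m+1$ points on which $f_1 - f_2$ vanishes. The only thing to be careful about is that an $m$-simplex in $\mathbb{R}^m$ really does have $m+1$ vertices forming a geometrically independent set — this is immediate from the definition recalled earlier in the section.
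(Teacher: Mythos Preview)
Your proof is correct and follows essentially the same approach as the paper: the paper's proof also considers $f_1-f_2$ and applies the preceding lemma to the geometrically independent vertex set of $\sigma$. Your treatment of the second assertion is more detailed than the paper's (which leaves it implicit), but the idea is the same.
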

\begin{proof}
	Follows from the above lemma, because $f_1-f_2$ is an affine linear map on $\mathbb{R}^m$ that vanishes on the geometrically independent set of all vertices of $\sigma$.
\end{proof}

\begin{corollary}\label{C3}
	Let $f \in \mathcal{F}(\mathbb{R}^m)$ be an affine linear map and
	$\sigma$ be an $m$-simplex in $\mathbb{R}^m$ with vertices $x_0, x_1,
	\ldots, x_m$ such that $y_0, y_1, \ldots, y_m$ are geometrically
	independent  in $\mathbb{R}^m$, where $y_j=f(x_j)$ for all
	$0\leq j\leq m$.   Then the following statements are
	true.\begin{description}
		\item[{\rm (i)}]  $f(U)$ is open for every open set $U$ in
		the interior of $\sigma$.
		\item[{\rm (ii)}] $f$ maps geometrically independent subsets of
		${\mathbb R}^m$ to geometrically independent subsets.
		\item[{\rm (iii)}] If $y_j\neq x_j$ for some $0\le j\le m$ and $\{v_0, v_1, \ldots , v_m\}$ is a geometrically independent subset of
		$\sigma$, then there exists at least one $j$ such that $f(v_j)\neq v_j$.
	\end{description}
\end{corollary}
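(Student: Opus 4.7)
The plan is to extract from the hypothesis a single structural fact that drives all three parts, namely that $f$ has full rank. Writing $f(\cdot)=T(\cdot)+z$ with $T$ linear, note that the vertices $x_0,\ldots,x_m$ of the $m$-simplex $\sigma$ form a geometrically independent set, so $\{x_1-x_0,\ldots,x_m-x_0\}$ is a basis of $\mathbb{R}^m$. By hypothesis $\{y_0,\ldots,y_m\}$ is also geometrically independent, so $\{y_1-y_0,\ldots,y_m-y_0\}=\{T(x_1-x_0),\ldots,T(x_m-x_0)\}$ is linearly independent. Thus $T$ maps a basis to a basis, so $T$ is invertible; hence $f$ is a homeomorphism of $\mathbb{R}^m$ onto itself.

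Part (i) then follows immediately: since $f$ is a homeomorphism of $\mathbb{R}^m$, it is an open map on $\mathbb{R}^m$, and in particular $f(U)$ is open in $\mathbb{R}^m$ for every open $U\subseteq \sigma^0$ (indeed for every open $U\subseteq\mathbb{R}^m$). Part (ii) follows by the same argument used to establish full rank, applied to an arbitrary geometrically independent set $\{z_0,z_1,\ldots,z_k\}\subseteq\mathbb{R}^m$: linear independence of $\{z_j-z_0\}_{j=1}^{k}$ is preserved by the invertible linear map $T$, and $T(z_j-z_0)=f(z_j)-f(z_0)$, so $\{f(z_0),\ldots,f(z_k)\}$ is geometrically independent.

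For part (iii), I would argue by contradiction. Suppose $\{v_0,\ldots,v_m\}\subseteq\sigma$ is geometrically independent and $f(v_j)=v_j$ for every $0\le j\le m$. Since $\{v_0,\ldots,v_m\}$ is a geometrically independent set of $m+1$ points, it spans an $m$-simplex $\tau$ on which $f$ agrees with the identity map. Both $f$ and $\mathrm{id}$ are affine linear on $\mathbb{R}^m$, so Corollary \ref{C2} forces $f=\mathrm{id}$ on all of $\mathbb{R}^m$. In particular $y_j=f(x_j)=x_j$ for every $j$, contradicting the assumption that $y_j\ne x_j$ for at least one index $j$.

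The only real subtlety is getting the invertibility of $T$ cleanly from the geometric independence of $\{y_0,\ldots,y_m\}$; once that is in hand, (i) and (ii) are essentially formal, and (iii) reduces to a direct application of the previously established Corollary \ref{C2}. I do not anticipate any genuine obstacle.
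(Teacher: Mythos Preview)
Your proof is correct and follows essentially the same approach as the paper: you establish full rank of $f$ from the geometric independence of the $y_j$'s, deduce (i) and (ii) immediately, and prove (iii) by contradiction via Corollary~\ref{C2}. The paper's own argument is the same, only more tersely stated.
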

\begin{proof}
	Results   (i) and  (ii) are trivially true, because geometric independence of
	$\{y_0, y_1, \ldots, y_m\}$ implies that $f$ is of full rank. If
	$f(v_j)=v_j$ for all $0\le j\le m$, then  $f={\rm id}$ on $\mathbb{R}^m$, implying that $x_j=f(x_j)=y_j$ for all $0\le
	j\le m$. Therefore (iii) follows.
\end{proof}

For each subset $S$ of $\mathbb{R}^m$, let
\begin{eqnarray*}
	{\rm Aff}(S):=\left\{\sum_{j=1}^{l}\alpha_jx_j: x_j\in S~\text{for}~1\le j\le l~\text{and}~\sum_{j=1}^{l}\alpha_j=1\right\}
\end{eqnarray*}
and
\begin{eqnarray*}
	{\rm Aff}_0(S):=\left\{\sum_{j=1}^{l}\alpha_jx_j: x_j\in S~\text{for}~1\le j\le l~\text{and}~\sum_{j=1}^{l}\alpha_j=0\right\}.
\end{eqnarray*}
Then ${\rm Aff}_0(S)$ is a vector subspace of $\mathbb{R}^m$ and
${\rm Aff}(S)={\rm Aff}_0(S)+x$ for some $x\in S$, where ${\rm
	Aff}_0(S)+x:=\{y+x:y\in {\rm Aff}_0(S)\}$.
We say that ${\rm Aff}(S)$ has dimension $k$, written as $dim({\rm Aff}(S))=k$, if
${\rm Aff}_0(S)$ is a $k$-dimensional vector subspace of $\mathbb{R}^m$. A point $z\in \mathbb{R}^m$ is said to be
{\it geometrically independent of $S$} if $z\notin {\rm Aff}(S)$. Then  it
is clear that $z$ is geometrically independent of $S$ if and
only if $0$ is geometrically independent of $S-z$.

\begin{lemma}\label{L3}
	Let $S$ be a countable subset of $\mathbb{R}^m$. Then
	for each $z\in \mathbb{R}^m$ and $\zeta>0$, there exists a $y\in B_\zeta(z)$
	such that $y$ is geometrically independent of $Z$ for all subset $Z$ of $S$ such that $\#Z\le m$.
\end{lemma}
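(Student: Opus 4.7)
The plan is to exploit the fact that for each subset $Z \subseteq S$ with $\#Z \le m$, the affine span $\mathrm{Aff}(Z)$ is an affine subspace of $\mathbb{R}^m$ of dimension at most $m-1$, hence a proper closed subset with empty interior (equivalently, with Lebesgue measure zero). Since $S$ is countable, the collection $\mathcal{Z} := \{Z \subseteq S : \#Z \le m\}$ of finite subsets of size at most $m$ is itself countable, so $\bigcup_{Z \in \mathcal{Z}} \mathrm{Aff}(Z)$ is a countable union of proper affine subspaces.

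First, I would verify the dimension bound: if $Z = \{x_1, \ldots, x_\ell\}$ with $\ell \le m$, then $\mathrm{Aff}_0(Z) = \mathrm{span}\{x_2 - x_1, \ldots, x_\ell - x_1\}$ has dimension at most $\ell - 1 \le m - 1$, so $\mathrm{Aff}(Z)$ is a translate of a proper subspace of $\mathbb{R}^m$. Such a set is nowhere dense in $\mathbb{R}^m$. Then I would invoke either a measure-theoretic or Baire-category argument on the open ball $B_\zeta(z)$: the set
\begin{equation*}
B_\zeta(z) \setminus \bigcup_{Z \in \mathcal{Z}} \mathrm{Aff}(Z)
\end{equation*}
is the complement (within a non-empty open set) of a countable union of nowhere-dense closed sets, and is therefore non-empty (in fact dense in $B_\zeta(z)$). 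Any $y$ chosen from this complement has the property that $y \notin \mathrm{Aff}(Z)$ for every $Z \in \mathcal{Z}$, which is exactly the claim that $y$ is geometrically independent of every such $Z$.

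I do not anticipate a serious obstacle here; the only minor points to double-check are the countability of $\mathcal{Z}$ (finite subsets of a countable set form a countable family) and the fact that one may take $y$ inside the open ball $B_\zeta(z)$ rather than merely close to $z$. Both are immediate, so the proof should be short and essentially a direct application of the Baire category theorem (or Lebesgue measure) to the explicit countable union of proper affine subspaces.
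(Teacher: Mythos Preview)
Your proposal is correct and follows essentially the same approach as the paper: both arguments observe that each $\mathrm{Aff}(Z)$ with $\#Z\le m$ is a proper affine subspace, hence nowhere dense, and then invoke the Baire category theorem to find a point of the ball avoiding the countable union. The only cosmetic difference is that the paper applies Baire to the complete metric space $\overline{B_{\zeta/2}(z)}$ (and then notes $\overline{B_{\zeta/2}(z)}\subseteq B_\zeta(z)$), whereas you work directly in the open ball; either version is fine.
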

\begin{proof}
	Since $dim({\rm Aff}(Z))\le m-1$, clearly ${\rm Aff}(Z)\bigcap \overline{(B_{\frac{\zeta}{2}}(z))}$
	is a nowhere dense subset of the complete
	metric space $\overline{(B_{\frac{\zeta}{2}}(z))}$ for
	each subset $Z$ of $S$ such that $\#Z\le m$. Therefore by Baire category theorem, we have
	\begin{eqnarray*}
		\overline {(B_{\frac{\zeta}{2}}(z))}\ne \bigcup\left({\rm Aff}(Z)\bigcap \overline {(B_{\frac{\zeta}{2}}(z))}\right),
	\end{eqnarray*}
	where the union is taken over all subsets $Z$ of $S$ such that
	$\#Z\le m$. Hence there exists a $y\in \overline{
		(B_{\frac{\zeta}{2}}(z))}$ such that $y$ is geometrically
	independent of $Z$ for all subset $Z$ of $S$ such that $\#Z\le m$. Since $\overline{
		(B_{\frac{\zeta}{2}}(z))}\subseteq B_\zeta(z)$, the result follows. 
\end{proof}

\begin{lemma}\label{L5}
	Let $z_1, z_2, \ldots$ be
	a sequence of points in $\mathbb{R}^m$ and $\zeta_1, \zeta_2,
	\ldots$ be a sequence of positive reals. Then there exists a
	sequence $y_1, y_2, \ldots$ in $\mathbb{R}^m$ such that $y_j\in
	B_{\zeta_j}(z_j)$ for each $j\in \mathbb{N}$ and  every subset $Z$
	of $S=\{y_1, y_2, \ldots\}$ with $\#Z\le m+1$ is geometrically
	independent.
\end{lemma}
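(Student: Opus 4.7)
The plan is to build the sequence $\{y_j\}$ inductively, invoking Lemma \ref{L3} at each step to choose the next point inside the prescribed ball while preserving geometric independence of all small subsets constructed so far.

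First I would set $y_1$ to be any point of $B_{\zeta_1}(z_1)$; the singleton $\{y_1\}$ is geometrically independent by convention. For the inductive step, assume $y_1,\ldots,y_n$ have been chosen so that $y_j\in B_{\zeta_j}(z_j)$ for each $j$ and every subset of $\{y_1,\ldots,y_n\}$ of cardinality at most $m+1$ is geometrically independent. Applying Lemma \ref{L3} with the finite set $S=\{y_1,\ldots,y_n\}$ and parameters $z=z_{n+1}$, $\zeta=\zeta_{n+1}$, I obtain a point $y_{n+1}\in B_{\zeta_{n+1}}(z_{n+1})$ such that $y_{n+1}\notin\mathrm{Aff}(Z)$ for every $Z\subseteq\{y_1,\ldots,y_n\}$ with $\#Z\le m$.

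To verify the inductive invariant at stage $n+1$, consider any $Z'\subseteq\{y_1,\ldots,y_{n+1}\}$ with $\#Z'\le m+1$. If $y_{n+1}\notin Z'$ then $Z'\subseteq\{y_1,\ldots,y_n\}$, so $Z'$ is geometrically independent by the inductive hypothesis. Otherwise write $Z'=\{y_{n+1}\}\cup Z$ with $Z\subseteq\{y_1,\ldots,y_n\}$ and $\#Z\le m$. Then $Z$ is geometrically independent by the inductive hypothesis, while $y_{n+1}\notin\mathrm{Aff}(Z)$ by the choice above; a routine check shows these two facts together imply $Z'$ is geometrically independent, because any relation $\alpha_0 y_{n+1}+\sum_{x\in Z}\alpha_x x=0$ with $\alpha_0+\sum\alpha_x=0$ either forces $\alpha_0=0$ (and then independence of $Z$ finishes it) or rearranges to express $y_{n+1}$ as an affine combination of $Z$, contradicting $y_{n+1}\notin\mathrm{Aff}(Z)$.

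The construction therefore proceeds without obstruction, and the resulting sequence $\{y_j\}$ satisfies $y_j\in B_{\zeta_j}(z_j)$ for all $j$ together with the required geometric independence property. The only real content is Lemma \ref{L3}; once that is in hand the argument is a straightforward induction, with the minor bookkeeping step being the verification that adjoining a point outside the affine hull of an already independent set preserves geometric independence. I do not foresee a genuine obstacle.
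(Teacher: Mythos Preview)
Your proof is correct and follows essentially the same inductive scheme as the paper: choose $y_1$ arbitrarily in its ball, then at each stage invoke Lemma~\ref{L3} with $S=\{y_1,\ldots,y_n\}$ to select $y_{n+1}$ outside the affine hull of every small subset, and verify the invariant. Your verification of the inductive step is in fact slightly more explicit than the paper's, which simply asserts the conclusion without spelling out the dichotomy on whether $y_{n+1}\in Z'$.
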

\begin{proof}
	Given $z_1\in \mathbb{R}^m$ and $\zeta_1>0$, consider any $y_1\in
	B_{\zeta_1}(z_1)$ such that $y_1\ne z_1$. Then clearly any subset $Z$ of
	$S_1=\{y_1\}$ with $\#Z\le m+1$ is geometrically independent. Next,
	by induction, suppose that $k>1$, and there exist $y_1, y_2, \ldots,
	y_k \in \mathbb{R}^m$ with $y_j\in B_{\zeta_j}(z_j)$ for $1\le j\le
	k$ such that $Z$ is geometrically independent for all subsets $Z$ of
	$S_k=\{y_1, y_2, \ldots, y_k\}$ with $\#Z\le m+1$. Then by Lemma
	\ref{L3} there exists $y_{k+1}\in B_{\zeta_{k+1}}(z_{k+1})$ such
	that $y_{k+1}$ is geometrically independent of $Z$ for all subset
	$Z$ of $S_k$ such that $\#Z\le m$. This implies that $Z$ is
	geometrically independent for all subset $Z$ of $S_{k+1}$ such that
	$\#Z\le m+1$, proving the result for $k+1$. Thus, continuing the
	above process we get a sequence $y_1, y_2, \ldots$ in $\mathbb{R}^m$
	such that $y_j\in B_{\zeta_j}(z_j)$ for each $j\in \mathbb{N}$
	satisfying the desired property.
\end{proof}


In addition to the above notions and results on simplicial complexes, we  need the following lemma to prove our results.

\begin{lemma}\label{L2}
	Let $X$ be a metric space with metric $d$ and $f\in \mathcal{C}(X)$. If $x\in X$ is not a fixed point of $f$, then there exists an open set $V$ in $X$ containing $x$ such that  $f^{-1}(V)\cap V=\emptyset$ and $f(V)\cap V=\emptyset$.
\end{lemma}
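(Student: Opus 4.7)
The plan is to observe first that the two conditions $f^{-1}(V)\cap V=\emptyset$ and $f(V)\cap V=\emptyset$ are in fact equivalent: both assert that there is no $y\in V$ with $f(y)\in V$. So I only need to find an open neighborhood $V$ of $x$ with $f(V)\cap V=\emptyset$.

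Set $\delta:=d(x,f(x))$, which is strictly positive since $x$ is not a fixed point of $f$. By continuity of $f$ at $x$, there is some $\eta>0$ with $d(y,x)<\eta \Rightarrow d(f(y),f(x))<\delta/3$. Take
\begin{equation*}
V:=B_{\eta'}(x),\qquad \eta':=\min\{\eta,\delta/3\}.
\end{equation*}
I will show this $V$ works. For any $y\in V$, two applications of the triangle inequality give
\begin{equation*}
d(f(y),x)\ge d(f(x),x)-d(f(x),f(y))>\delta-\delta/3=2\delta/3,
\end{equation*}
while for any $z\in V$ we have $d(z,x)<\eta'\le\delta/3$. Therefore
\begin{equation*}
d(f(y),z)\ge d(f(y),x)-d(x,z)>2\delta/3-\delta/3=\delta/3>0,
\end{equation*}
so $f(y)\ne z$ for every $y,z\in V$. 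This yields $f(V)\cap V=\emptyset$, and hence also $f^{-1}(V)\cap V=\emptyset$.

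There is no real obstacle here; the only point worth flagging is the elementary observation that the two stated conditions are the same, which makes the lemma a direct application of continuity plus the triangle inequality with the separating radius $\delta/3$.
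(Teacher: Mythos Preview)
Your proof is correct and follows essentially the same approach as the paper's: choose a small ball around $x$ using continuity and then apply the triangle inequality. The one nice streamlining you add is the observation that $f^{-1}(V)\cap V=\emptyset$ and $f(V)\cap V=\emptyset$ are equivalent, which lets you handle both conclusions at once, whereas the paper verifies them separately by contradiction.
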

\begin{proof}
	Let $\epsilon=d(x,f(x))$. Since $f(x)\ne x$,  clearly  $\epsilon>0$. Since $f$ is continuous at $x$, there exists $\delta>0$ with $\delta<\frac{\epsilon}{4}$ such that
	\begin{eqnarray}\label{6}
	d(f(x), f(y)) <\frac{\epsilon}{4}~~\text{whenever}~~ y\in X~~\text{with}~~d(x,y)<\delta.
	\end{eqnarray}
	Let $V:=B_\frac{\delta}{2}(x)$.
	
	\noindent {\bf Claim:} $f^{-1}(V)\cap V=\emptyset$  and $f(V)\cap
	V=\emptyset$.
	
	Suppose that $y\in f^{-1}(V)\cap V$. Then $f(y)\in V$, implying that
	$d(y, f(y))\le d(y,x)+d(x,f(y))<\delta$. Also,  by \eqref{6} we have
	$d(f(x), f(y))<\frac{\epsilon}{4}$. Therefore $\epsilon=d(x,
	f(x))\le d(x, y)+d(y, f(y))+d(f(y), f(x))<\frac{3\epsilon}{4}$ is a
	contradiction. Next, suppose that $y\in f(V)\cap V$. Then $y=f(z)$
	for some $z\in V$. Therefore, by \eqref{6} we have $d(f(x),
	y)<\frac{\epsilon}{4}$, implying that $\epsilon=d(x, f(x))\le d(x,
	y)+d(y, f(x))<\frac{\epsilon}{2}$, which is a contradiction. Thus the claim
	holds, and the result follows.
\end{proof}

Having the above machinery, we are ready to prove our desired results.
\begin{theorem}\label{Thm1}
	If $h\in \mathcal{C}(I^m)$, then for each $\epsilon
	>0$ the ball $B_{\epsilon }(h)$ of radius $\epsilon$ around $h$ contains a map $f$
	having no (even discontinuous) iterative square roots on $I^m$. In particular,
	$\mathcal{W}(2;I^m)$
	does not contain any ball of $\mathcal{C}(I^m)$.
\end{theorem}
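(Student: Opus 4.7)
The approach is to produce, for any $h \in \mathcal{C}(I^m)$ and $\epsilon > 0$, a continuous map $f \in B_\epsilon(h)$ to which Theorem \ref{T3}, Case (iii), applies. The construction equips $f$ with a \emph{plateau}---a small closed ball $\overline{B_{\eta/2}(p)}$ on which $f$ is constant with value $y_0$---so that $f^{-1}(y_0)$ is uncountable, while arranging that $f^{-1}(x)$ is finite (hence countable) for every $x \ne y_0$.

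First, using uniform continuity, I approximate $h$ by the piecewise affine linear interpolation $h_1$ on a fine triangulation $\mathcal{K}$ of $I^m$, so $\rho(h, h_1) < \epsilon/4$. Then Lemma \ref{L5} supplies $h_2$ (perturbing each vertex value by less than $\epsilon/8$) whose every $(m+1)$ distinct vertex images are geometrically independent; by Corollary \ref{C3}, each $h_2|_\sigma$ is full-rank affine linear, hence injective and open. I choose $p$ in the interior of some $m$-simplex $\sigma^* \in \mathcal{K}$ such that $p \in (h_2(I^m))^0$, $y_0 := h_2(p) \ne p$, and $h_2(y_0) \ne y_0$; these are generic and attainable by a further application of Lemma \ref{L5} if necessary. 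Fix $\eta > 0$ small enough that $\overline{B_\eta(p)} \subset (\sigma^*)^0$, $y_0 \notin B_\eta(p)$, $h_2^{-1}(B_{\eta/2}(p)) \cap B_\eta(p) = \emptyset$, and the oscillation of $h_2$ on $B_\eta(p)$ is less than $\epsilon/4$. Define $f := h_2$ outside $B_\eta(p)$, $f := y_0$ on $\overline{B_{\eta/2}(p)}$, and on the annulus
\[ f(p + r\omega) := y_0 + g(r)\bigl(h_2(p + \eta\omega) - y_0\bigr), \qquad r \in (\eta/2, \eta),\ \|\omega\|_\infty = 1, \]
where $g : [\eta/2, \eta] \to [0, 1]$ is continuous, strictly increasing, with $g(\eta/2) = 0$ and $g(\eta) = 1$. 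Then $f$ is continuous and $\rho(f, h) < \epsilon$.

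To verify Theorem \ref{T3}(iii) with $x_0 = y_0$: (a) $f(y_0) = h_2(y_0) \ne y_0$; (b) $f^{-1}(y_0) \supseteq \overline{B_{\eta/2}(p)}$ is uncountable, and since $h_2^{-1}(B_{\eta/2}(p))$ is a nonempty open subset of $I^m \setminus B_\eta(p)$ (by $p \in (h_2(I^m))^0$) on which $f = h_2$, we have $f^{-2}(y_0) \supseteq f^{-1}(B_{\eta/2}(p))$ uncountable; (c) for each $x \ne y_0$, preimages outside $B_\eta(p)$ are finite (full-rank PL), those in $\overline{B_{\eta/2}(p)}$ are empty, and in the annulus, writing $h_2(p+\eta\omega) - y_0 = \eta T\omega$ with $T$ the invertible linear part of $h_2|_{\sigma^*}$, the equation $g(r)\eta T\omega = x - y_0$ forces $\omega = T^{-1}(x-y_0)/(\eta g(r))$; the constraint $\|\omega\|_\infty = 1$ pins down $g(r) = \|T^{-1}(x-y_0)\|_\infty/\eta$, and hence $r$ uniquely by strict monotonicity. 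Thus $f^{-1}(x)$ is finite for all $x \ne y_0$, and Theorem \ref{T3}(iii) rules out square roots in $\mathcal{F}(I^m)$.

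The principal technical obstacle is enforcing the three genericity conditions on $p$ and $h_2$ simultaneously---notably that $p$ lies in the open range of $h_2$ while both $p$ and $y_0$ avoid the fixed-point set of $h_2$---particularly when $h$ is close to the identity, where the PL approximation $h_2$ may itself have a dense set of near-fixed points. All such difficulties are dissolved by the perturbation freedom provided by Lemma \ref{L5}, since each of the required conditions cuts out a dense open subset of the admissible vertex-value configurations.
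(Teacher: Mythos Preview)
Your argument is correct and follows the same overall strategy as the paper: approximate $h$ by a piecewise affine linear map that is full rank on each $m$-simplex, locate a point $p$ in the interior of the range that is not fixed and whose image is also not fixed, flatten the map to a constant $y_0$ on a small neighbourhood of $p$, and then invoke Theorem~\ref{T3}. The differences are in the implementation of the flattening and in how the non-fixed-point conditions are secured. The paper stays entirely within the PL category: it takes a fine barycentric subdivision, collapses a single small $m$-simplex $\sigma_0$ to a vertex value $v_0$, and absorbs the change into the adjacent simplices by redefining vertex values (Steps~3--4); injectivity on every simplex other than $\sigma_0$ then gives finite preimages and Case~(ii) of Theorem~\ref{T3} applies directly. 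Your radial interpolation on an $\ell^\infty$-annulus is not PL but is more explicit; your verification that the annulus contributes at most one preimage (via the invertible linear part $T$ and the strict monotonicity of $g$) is clean, and in fact establishes finiteness, so Case~(ii) would already suffice. Where your write-up is thinner than the paper is the ``principal technical obstacle'': the paper pins down $f_0(v_0)\ne v_0$ by arranging that the images $v_0,\ldots,v_m$ lie in a single simplex $\Delta_1$ and then applying Corollary~\ref{C3}(iii), whereas you appeal globally to Lemma~\ref{L5}. Your claim is correct---once every $h_2|_\sigma$ is full rank and not the identity, the fixed-point set of $h_2$ is a finite union of proper affine subspaces, so its $h_2$-preimage is also Lebesgue-null and a generic $p$ in the open set $(h_2(I^m))^0$ satisfies both $h_2(p)\ne p$ and $h_2(h_2(p))\ne h_2(p)$---but it is worth stating this measure-zero argument rather than deferring to Lemma~\ref{L5}, which concerns geometric independence rather than fixed-point avoidance.
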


\begin{proof}
	Let $h\in \mathcal{C}(I^m)$  and $\epsilon>0$ be arbitrary.
	Our strategy to prove the existence of an  $f\in B_\epsilon(h)$ with no square roots is to make use of
	Theorem \ref{T3}, and we construct this $f$ in a few steps.
	
	\noindent {\it Step 1: Construct a
		piecewise affine linear map $f_0\in B_\epsilon(h)$ supported by some suitable triangulation $\mathcal {K}$ of
		$I^m$.}

	Since $h$ is uniformly continuous on $I^m$, there exists a $\delta>0$ such that
	\begin{eqnarray}\label{1}
	\|h(x)-h(y)\|_\infty <\frac{\epsilon }{10} ~~\text{whenever}~~x, y\in I^m~~\text{with}~~\|x-y\|_\infty<\delta.
	\end{eqnarray}
	Being a polyhedron, $I^m$ is  triangulable.
	Consider a  triangulation $\mathcal{K}$  of $I^m$ with vertices
	$x_0, x_1, x_2, \ldots, x_r$ such that $\|x_{i_0}-x_{i_1}\|_\infty<\frac{\delta}{4}$ for all $2$-simplex $\langle x_{i_0},x_{i_1}\rangle$ of $\sigma$ and for all $m$-simplex $\sigma \in \mathcal{K}$. Choose
	$y_0, y_1, y_2,\ldots, y_r \in I^m$ inductively such that $y_j\ne x_j$ and
	\begin{eqnarray}\label{2}
	\|h(x_j)-y_j\|_\infty <\frac{\epsilon }{20}~~\text{for all}~~0\le j\le r,
	\end{eqnarray}
	and $\{y_{i_0}, y_{i_1}, \ldots, y_{i_m}\}$ is geometrically
	independent whenever $\langle x_{i_0}, x_{i_1},\dots,
	x_{i_m}\rangle \in \mathcal{K}$ for $0\le i_0, i_1, \ldots, i_m\le
	r$. This is possible by Lemma \ref{L5}. Here, while choosing $y_j$'s with $y_j\in B_{\frac {\epsilon}{20} }(h(x_j))$ for $0\le j\le r$ satisfying the geometric independence condition, we have some additional restrictions to fulfill,  viz. $y_j\neq x_j$ and $y_j\in I^m$ for all $0\le j\le r$.  However, the Baire category theorem  provides this flexibility as there are plenty of vectors to choose from.     Then
	\begin{eqnarray}
	\|y_{i_0}-y_{i_1}\|_\infty&\le& \|y_{i_0}-h(x_{i_0})\|_\infty+\|h(x_{i_0})-h(x_{i_1})\|_\infty+
	\|h(x_{i_1})-y_{i_1}\|_\infty\nonumber \\
	&<&  \frac{\epsilon}{20}+\frac{\epsilon }{10}+\frac{\epsilon }{20}=\frac{\epsilon }{5}  \label{5} 
	\end{eqnarray}
	whenever  $\langle x_{i_0},x_{i_1}\rangle$ is a $2$-simplex of $\sigma$ for all $m$-simplex $\sigma \in \mathcal{K}$.
	Let $f_0:|\mathcal{K}|=I^m \to I^m$ be the map $f_{U, V}$ defined as in \eqref{f_0}, where $U$ and $V$
	are the ordered sets $\{x_0, x_1, \ldots, x_r\}$ and  $\{y_0, y_1, \ldots, y_r\}$, respectively.
	Then $f_0$ is a continuous piecewise affine linear self-map of  $I^m$ satisfying that $f_0(x_j)=y_j$
	for all $0\le j\le r$. Also, by using  result  (i) of Lemma
	\ref{L1}, we have $R(f_0)^0\ne \emptyset$ and $f_0|_\sigma$ is
	injective for all $\sigma \in \mathcal{K}$. Further,
	\begin{eqnarray}\label{7}
	\mbox{diam}(f_0(\sigma ))<\frac{\epsilon }{5}~~\text{for all}~~
	\sigma \in \mathcal{K}.
	\end{eqnarray}
	
	\noindent{\bf Claim:} $\rho(f_0,h)<\frac{\epsilon}{6}$.
	
	Consider an arbitrary $x\in I^m$. Then $x$ is in the interior of some unique $k$-simplex $\langle x_{i_0}, x_{i_1}, \dots, x_{i_k}\rangle$  in $\mathcal{K}$ for some $0\le k\le m$. Let $x=\sum_{j=0}^{k}\alpha_{i_j}x_{i_j}$ with $\sum_{j=0}^{k}\alpha_{i_j}=1$.
	Then by \eqref{1} and \eqref{2}, we have
	\begin{eqnarray}
	\|f_0(x)-h(x)\|_\infty&=&\left\|\sum_{j=0}^{k}\alpha_{i_j}y_{i_j}-\sum_{j=0}^{k}\alpha_{i_j}h(x)\right\|_\infty \nonumber\\
	&\le & \sum_{j=0}^{k}\alpha_{i_j}\left\{\|y_{i_j}-h(x_{i_j})\|_\infty+\|h(x_{i_j})-h(x)\|_\infty\right\}\nonumber\\
	&<& \frac{\epsilon }{20}+\frac{\epsilon }{10}\nonumber\\
	&<&\frac{\epsilon}{6},  \label{4}
	\end{eqnarray}
	since $\|x-x_{i_j}\|_\infty\le \mbox{diam}(\langle x_{i_0}, x_{i_1}, \dots, x_{i_k}\rangle)<\frac{\delta}{4}$
	for $0\le j\le k$. This proves the claim and Step 1 follows.

	\noindent {\it Step 2: Obtain  an $m$-simplex $\Delta \in \mathcal
		{K}  $ containing a non-empty open set $Y\subseteq \Delta ^0\cap
		R(f_0)^0$ such that $f_0(Y)\subseteq \Delta _1^0$ for some
		$m$-simplex $\Delta _1$ in ${\mathcal K}.$}
	
	Consider any $m$-simplex $\sigma =\langle x_{i_0}, x_{i_1},\ldots, x_{i_m}\rangle$ in $\mathcal{K}$. Since $\{y_{i_0}, y_{i_1},\ldots, y_{i_m}\}$ is geometrically
	independent by construction, we have $f_0(U)$ is open in $I^m$ for each open set $U$ in
	$\sigma^0$. Choose a non-empty open subset	$Y_0$ of $f(\sigma )^0$ so small that $Y_0\subseteq \Delta^0$ for some
	$m$-simplex $\Delta \in \mathcal {K}$. Then $f_0(Y_0)$ 
	has non-empty interior. Again, choose a sufficiently small non-empty open subset  $Y$ of $Y_0$
	such that $f_0(Y)\subseteq \Delta _1$ for some
	$m$-simplex $\Delta _1\in {\mathcal K}.$ This completes the proof of
	Step 2.

	
	
	\noindent {\it Step 3: Show that there exists a barycentric subdivision $\mathcal {K}^{(s)}$ of $\mathcal {K}$ for some
		$s \in \mathbb {N}$ with an $m$-simplex  $\sigma_0\in \mathcal{K}^{(s)}$  such that $\sigma_0\subseteq Y$, and $f_0^{-1}(\sigma_0)\cap \sigma=\emptyset$ and
		$f_0(\sigma_0)\cap \sigma=\emptyset$ for all $\sigma\in \mathcal{K}^{(s)}$ with $\sigma\cap\sigma_0\ne
		\emptyset$.}
	
	Since $y_j\neq x_j$ for all $0\le j\le r$, by Corollary \ref{C3} we have
	$f_0(z_0)\ne z_0$ for some $z_0\in Y$. Choose
	an open set $W$ in $I^m$  such that $z_0\in W\subseteq Y$
	and $f_0(x)\ne x$ for all $x\in W$. Then using Lemma \ref{L2}, we get an open set $V$ in $I^m$ with $z_0\in V\subseteq W$
	such that $f_0^{-1}(V)\cap V=\emptyset$ and $f_0(V)\cap V=\emptyset$.  Perform a barycentric
	subdivision of $\mathcal{K}$ of order $s$ so large that $V$ contains an $m$-simplex $\sigma _0\in \mathcal{K}^{(s)}$ and all
	$m$-simplices $\sigma\in \mathcal{K}^{(s)}$ adjacent to it, i.e.,  $\sigma \subseteq V$ for all $m$-simplices $\sigma \in \mathcal{K}^{(s)}$ such that $\sigma \cap \sigma _0\ne \emptyset$. Since $\lim\limits_{l\to \infty}mesh(\mathcal{K}^{(l)})=0$, it is possible to obtain such a subdivision of $\mathcal{K}$. This completes the proof of Step 3.
	
	\noindent {\it Step 4: Modify $f_0$ slightly to get a new piecewise affine
		linear map $f\in B_\epsilon (h)$ that is constant on $\sigma
		_0.$}
	
	
	To retain continuity, when we modify $f_0$ on $\sigma _0$,  we must also modify it on all $m$-simplices in $\mathcal{K}^{(s)}$ adjacent to it.  Let $A:=\bigcup_{j=0}^{p}\sigma_j$, 
	where $\sigma_j$ is precisely an $m$-simplex in
	$\mathcal{K}^{(s)}$ such that $\sigma_j\cap \sigma_0\ne \emptyset$ for all $0\le j\le p$.
	Then, it is clear that $A\subseteq V\subseteq \Delta
	^0$.
	

	Let the vertices of $\sigma _0$ be  $u_0,\ldots, u_{m-1}$ and $u_m$,
	and let
	$v_j:=f_0(u_j)$ for all $0\le j \le m$. Then  $v_j\ne u_j$ for all $0\le j\le m$, because $u_j\in V$ for all $0\le j\le m$ and $f_0(V)\cap V=\emptyset$. Also, since $u_0, u_1, \ldots , u_m$ are
	geometrically independent vectors  in $\Delta$, by result  (ii) of
	Corollary \ref{C3}, we see that $v_0, \ldots, v_m$ are geometrically
	independent. Further,  they are all contained in the
	interior of a single simplex $\Delta _1$ of $\mathcal{K}$ by Step 2. Hence, by
	result  (iii) of Corollary \ref{C3}, it follows that $f_0(v_j)\neq v_j$ for some $0\le j\le m$.
	Without loss of generality, we assume that $f_0(v_0)\neq v_0.$

	Extend $\{u_0, \ldots, u_m\}$ to  $\{u_0, \ldots , u_k\}$ with
	$k> m$  to include the vertices of all the  simplices of ${\mathcal K}^{(s)}$, and let $v_j:=
	f_0(u_j)$ for all $0\leq j\leq k$. Now, define $f$ by
	\begin{eqnarray*}
		f(u_j) = \left\{ \begin{array}{lll}
			v_0&\text{if}& 0\leq j\leq m,\\
			v_j&\text{if}& m<j\leq k,\end{array}\right.
	\end{eqnarray*}
	and extend it piecewise affine
	linearly to whole of $|{\mathcal K}^{(s)}|= I^m$. Then $f_0$ gets
	modified only on $A$ and  $f(x)=v_0$ for all $x\in \sigma _0$.
	
	Consider an arbitrary $x\in A$.
	Then $x\in \sigma_{j}$ for some $0\le j\le p$. Let $\sigma_{j}=\langle u_{j_0}, u_{j_1}, \ldots, u_{j_m}\rangle$
	and $x=\sum_{i=0}^{m}\alpha_{j_i}u_{j_i}$ with $\sum_{i=0}^{m}\alpha_{j_i}=1$.  Then by \eqref{7}, we have
	\begin{eqnarray*}
		\|f(x)-f_0(x)\|_\infty\le \sum_{i=0}^{m}\alpha_{j_i}\|f(u_{j_i})-f_0(u_{j_i})\|_\infty
		\le \sum_{i=0}^{m}\alpha_{j_i}\mbox{diam}f_0(\Delta)
		<  \frac{\epsilon}{5},
	\end{eqnarray*}
	implying by \eqref{4} that
	\begin{eqnarray*}
		\|f(x)-h(x)\|_\infty\le \|f(x)-f_0(x)\|_\infty+\|f_0(x)-h(x)\|_\infty
		< \frac{\epsilon}{5}+\frac{\epsilon}{6}<\frac{\epsilon}{2}.
	\end{eqnarray*}
	Therefore $f\in B_\epsilon(h)$ and the proof of Step 4 is completed.

	\noindent {\it Step 5. Prove that $f$ has no square roots in
		$\mathcal{C}(I^m)$}.
	
	Since $A\subseteq V$, $f=f_0$ on $A^c$,
	and $f_0(V)\cap V=\emptyset$, we see that  $f(v_0)\neq v_0$.
	Also, by Step 3, we have $f_0^{-1}(\sigma _0)\cap \sigma =\emptyset $ for
	all $\sigma \in {\mathcal K}^{(s)}$ with $\sigma \cap \sigma _0\neq
	\emptyset$, implying that $f=f_0$  on $f_0^{-1}(\sigma _0)$.
	Further, $f_0^{-1}(\sigma_0)$ is infinite, since $\sigma_0$ is infinite and $\sigma_0\subseteq
	R(f_0)^0$. Therefore, as
	$f^{-2}(v_0)\supseteq f^{-1}(f^{-1}(v_0))\supseteq f^{-1}(\sigma
	_0)\supseteq f_0^{-1}(\sigma_0)$, it follows that $f^{-2}(v_0)$ is infinite.  Moreover,  $f|_\sigma$ is injective for every $\sigma \in
	\mathcal{K}^{(s)}$ with $\sigma \neq \sigma _0$  by result  (i) of
	Lemma \ref{L1}, and hence 
	$f^{-1}(x)$ is finite for all $x\ne v_0$ in $I^m$.
	Thus,	$f$ satisfies the conditions of Case  (ii) of
	Theorem \ref{T3}, proving that it has no square roots in $\mathcal{C}(I^m)$.
\end{proof}

\begin{theorem}\label{boundary theorem}
	Let $h\in \mathcal{C}(I^m)$  be such that $h(x_0)=x_0$ for some $x_0 \in \partial I^m$. Then for each $\epsilon
	>0$ the ball $B_{\epsilon }(h)$ of radius $\epsilon$ around $h$ contains a map $f$
	having a continuous iterative square root on $I^m$. In other words,
	the closure of    $\mathcal{W}(2;I^m)$
	contains all maps in $\mathcal{C}(I^m)$ that have a fixed point on the boundary of $I^m$.
\end{theorem}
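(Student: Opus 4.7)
The plan is to construct, for each $\epsilon>0$, a map $f\in B_\epsilon(h)$ of the form $f=g\circ g$ for some $g\in\mathcal{C}(I^m)$, by exploiting the boundary placement of $x_0$ to execute a controlled two-component swap in the spirit of Proposition \ref{two components}.

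First I would reduce, via a self-homeomorphism of $I^m$ carrying $x_0$ to the corner $0=(0,\ldots,0)$ and rescaling $\epsilon$ by the modulus of uniform continuity of this homeomorphism and its inverse, to the case $x_0=0$. Then, by continuity of $h$ at $0$ and $h(0)=0$, I would choose $\eta>0$ so small that $h$ maps the corner cube $C_\eta=[0,2\eta]^m$ into $B_{\epsilon/10}(0)\cap I^m$; by also arranging $\epsilon/10\leq\eta$, one has $h(C_\eta)\subseteq K$, where $K=[0,\eta]\times[0,2\eta]^{m-1}$.

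The central construction is a symmetrization of $h$ inside $C_\eta$. Let $\phi(x_1,x_2,\ldots,x_m)=(2\eta-x_1,x_2,\ldots,x_m)$ be the reflection across $\{x_1=\eta\}$; it is an involution of $C_\eta$ swapping $K$ with $K':=[\eta,2\eta]\times[0,2\eta]^{m-1}$ and fixing the common face $F=\{\eta\}\times[0,2\eta]^{m-1}$ pointwise. I would modify $h$ on $C_\eta$ to produce a map $f$ satisfying the symmetry $f\circ\phi=\phi\circ f$ on $C_\eta$ and the normalization $f|_F=\mathrm{id}|_F$ (a modification of sup-norm size $O(\eta)$, since both $h$ and $\mathrm{id}|_F$ take values of magnitude $O(\eta)$ there), while leaving $f=h$ outside $C_\eta$. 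For small enough $\eta$ this gives $\|f-h\|_\infty<\epsilon$. The symmetry together with the invariance $f(K)\subseteq K$ then allows an explicit continuous square root $g_0\colon C_\eta\to C_\eta$ via the swap formula of Proposition \ref{two components}: set $g_0|_K=\phi$ and $g_0|_{K'}(y)=f(\phi(y))$. A direct computation using $\phi^2=\mathrm{id}$ and the symmetry yields $g_0^2=f$ on $C_\eta$, and continuity of $g_0$ on $F$ follows from $f|_F=\mathrm{id}$.

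The principal technical obstacle is the global extension of $g_0$ to a continuous $g\in\mathcal{C}(I^m)$ satisfying $g^2=f$ on all of $I^m$, together with the consistent gluing of the symmetrized $f$ with $h$ across the inner boundary $\partial C_\eta\cap I^{m\circ}$ — along which the conditions $f|_F=\mathrm{id}$ and $f=h$ are generically in conflict. Resolving this requires a further $O(\epsilon)$-small modification of $h$ in a thin collar around $\partial C_\eta$, together with a piecewise affine linear construction of $g$ outside $C_\eta$ whose square agrees with the (modified) $h$ and which matches $g_0$ along the collar. The boundary placement of $x_0$ is essential here: because $x_0\in\partial I^m$, the inner boundary $\partial C_\eta\cap I^{m\circ}$ has strictly fewer faces than it would for an interior fixed point, providing the geometric room needed for this extension to be carried out within the $\epsilon$-tolerance. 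The final verification reduces to piecewise affine linear bookkeeping of the kind used in Section \ref{sec3}.
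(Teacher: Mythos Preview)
Your proposal has a genuine gap at exactly the point you flag as the ``principal technical obstacle'': extending the square root $g_0$ from the small corner cube $C_\eta$ to a continuous $g$ on all of $I^m$ with $g^2=f$ everywhere. Outside $C_\eta$ you have set $f=h$, so you are asking for a continuous square root of (a slight modification of) $h$ on $I^m\setminus C_\eta$ that also matches $g_0$ along $\partial C_\eta$. That is not ``piecewise affine linear bookkeeping''; it is essentially the original problem restricted to a slightly smaller domain, and nothing in your argument supplies it. The boundary placement of $x_0$ does reduce the number of faces of $\partial C_\eta\cap (I^m)^\circ$, but that in no way produces a square root of $h$ on the large complementary region.

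The paper avoids this difficulty by choosing the two swapped pieces very differently: the components are a small simplex $\sigma_0$ at $x_0$ and the \emph{entire} closure of its complement $\overline{I^m\setminus\sigma_0}$. Because $x_0\in\partial I^m$, there is an involutive homeomorphism $\phi$ of $I^m$ swapping these two pieces and fixing their common face. One then takes a piecewise affine $f_1$ with $f_1|_{\sigma_0}=\mathrm{id}$, $f_1(\overline{I^m\setminus\sigma_0})\subseteq\overline{I^m\setminus\sigma_0}$, and $\rho(f_1,h)<\epsilon$, and defines the square root $g$ globally in one stroke: $g=\phi$ on the large piece and $g=f_1\circ\phi$ on $\sigma_0$. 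Then $g^2=f_1$ on $\overline{I^m\setminus\sigma_0}$ (close to $h$ there), while $g^2$ lands back in the tiny $\sigma_0$ on $\sigma_0$ (automatically close to $h$ since $h(x_0)=x_0$). No extension step is needed because the swap already covers all of $I^m$. Your local-cube swap with two small halves cannot play this role; the asymmetry between a small corner and its large complement is the whole point.
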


\begin{proof}
	Let $h\in \mathcal{C}(I^m)$ be such that $h(x_0)=x_0$ for some $x_0\in \partial I^m$. In view of Step 1 of Theorem \ref{Thm1}, without loss of generality, we assume that $h$ is piecewise affine linear on $I^m$.
	Since $h$ is uniformly continuous on $I^m$, there is a $\delta$ with $0<\delta<\frac{\epsilon}{4}$ such that
	\begin{eqnarray}\label{12}
	\|h(x)-h(y)\|_\infty <\frac{\epsilon }{4} ~~\text{whenever}~~x, y\in I^m~~\text{with}~~\|x-y\|_\infty<\delta.
	\end{eqnarray}
	Consider a  triangulation $\mathcal{K}$  of $I^m$ with vertices
	$x_0, x_1, x_2,\ldots, x_r$ such that
	$\|x_{i_0}-x_{i_1}\|_\infty<\frac{\delta}{4}$ for all $2$-simplices
	$\langle x_{i_0},x_{i_1}\rangle$ of $\sigma$ and for all $m$-simplex
	$\sigma \in \mathcal{K}$. Let $y_j:=h(x_j)$ for all $0\le j \le r$ and
	$\sigma_0=\langle x_0, x_1, \ldots, x_m\rangle$ be an $m$-simplex in
	$\mathcal{K}$ having $x_0$ as a vertex. Let $\phi:I^m \to I^m$ be a
	homeomorphism such that $\phi(\sigma_0)=\overline{I^m\setminus
		\sigma_0}$, $\phi\big(\overline{I^m\setminus \sigma_0}\big)=
	\sigma_0$, $\phi={\rm id}$ on $\partial \sigma_0\bigcap
	\partial(I^m\setminus \sigma_0)$, and $\phi^2={\rm id}$ on $I^m$.
	This is where we use the assumption that $x_0$ is on the boundary of
	$I^m.$ Indeed, a little thought shows that there exists a
	piecewise affine linear map $\phi$ that satisfies these conditions.
	Define a function $f_1$ by
	\begin{eqnarray*}
		f_1(x_j) = \left\{ \begin{array}{lll}
			x_j&\text{if}& x_j\in \sigma_0,\\
			x_1&\text{if}& x_j\notin \sigma_0~\text{and}~y_j\in \sigma_0^0,\\
			y_j&\text{if}&x_j\notin \sigma_0 ~\text{and}~y_j\notin \sigma_0^0,
		\end{array}\right.
	\end{eqnarray*}
	and extend it piecewise affine
	linearly to whole of $|{\mathcal K}|= I^m$. Then $f_1\in \mathcal{C}(I^m)$ such that $f_1|_{\sigma_0}={\rm id}$ and $f_1\big(\overline{I^m\setminus \sigma_0}\big)\subseteq \overline{I^m\setminus \sigma_0}$.
	
	\noindent{\bf Claim:} $\rho(f_1,h)<\epsilon$.
	
	Let $x\in I^m$ be arbitrary. If $x\in \sigma_0$, then there exist non-negative reals $\alpha_0, \alpha_1, \ldots, \alpha_m$ with $\sum_{j=0}^{m}\alpha_j=1$ such that $x=\sum_{j=0}^{m}\alpha_jx_j$, implying by \eqref{12} that
	\begin{eqnarray*}
		\|f_1(x)-h(x)\|_\infty&=&\left\|\sum_{j=0}^{m}\alpha_jf_1(x_j)-\sum_{j=0}^{m}\alpha_jh(x_j)\right\|_\infty \\
		&\le & \sum_{j=0}^{m}\alpha_j\left\{\|x_j-x_0\|_\infty+\|h(x_0)-h(x_j)\|_\infty\right\}\\
		&<& \frac{\epsilon }{4}+\frac{\epsilon }{4}=\frac{\epsilon}{2},
	\end{eqnarray*}
	since $\|x_j-x_0\|_\infty \le diam(\sigma_0)<\delta<\frac{\epsilon}{4}$ for all $0\le j\le m$. If $x\notin \sigma_0$, then there exists $\sigma=\langle x_{i_0}, x_{i_1},\ldots, x_{i_m}\rangle\in \mathcal{K}$ such that $x\in \sigma$. Let $x=\sum_{j=0}^{m}\alpha_{i_j}x_{i_j}$, where $\alpha_{i_0}, \alpha_{i_1}, \ldots, \alpha_{i_m}$ are non-negative and $\sum_{j=0}^{m}\alpha_{i_j}=1$. Then by using \eqref{12}, we have
	\begin{eqnarray*}
		\|f_1(x)-h(x)\|_\infty&=&\left\|\sum_{j=0}^{m}\alpha_{i_j}f_1(x_{i_j})-\sum_{j=0}^{m}\alpha_{i_j}h(x_{i_j})\right\|_\infty \\
		&\le & \sum_{j=0}^{m}\alpha_{i_j}\|f_1(x_{i_j})-y_{i_j}\|_\infty\\
		&=&\sum_{\substack{0\le j\le m\\x_{i_j}\in \sigma_0}}^{}\alpha_{i_j}\|f_1(x_{i_j})-y_{i_j}\|_\infty+ \sum_{\substack{0\le j\le m\\ x_{i_j}\notin \sigma_0~\text{and}~ y_{i_j}\in \sigma_0^0} }^{}\alpha_{i_j}\|f_1(x_{i_j})-y_{i_j}\|_\infty\\
		& &+\sum_{\substack{0\le j\le m\\ x_{i_j}\notin \sigma_0~\text{and}~ y_{i_j}\notin \sigma_0^0}}^{}\alpha_{i_j}\|f_1(x_{i_j})-y_{i_j}\|_\infty\\
		& \le &	\sum_{\substack{0\le j\le m\\x_{i_j}\in \sigma_0}}^{}\alpha_{i_j}\|x_{i_j}-y_{i_j}\|_\infty+ \sum_{\substack{0\le j\le m\\ x_{i_j}\notin \sigma_0~\text{and}~ y_{i_j}\in \sigma_0^0} }^{}\alpha_{i_j}\|x_1-y_{i_j}\|_\infty\\
		& &
		+\sum_{\substack{0\le j\le m\\ x_{i_j}\notin \sigma_0~\text{and}~ y_{i_j}\notin \sigma_0^0}}^{}\alpha_{i_j}\|y_{i_j}-y_{i_j}\|_\infty \\
		&\le & \sum_{\substack{0\le j\le m\\x_{i_j}\in \sigma_0}}^{}\alpha_{i_j}\left\{\|x_{i_j}-x_0\|_\infty+\|h(x_0)-h(x_{i_j})\|_\infty\right\}\\
		& &+ \sum_{\substack{0\le j\le m\\ x_{i_j}\notin \sigma_0~\text{and}~ y_{i_j}\in \sigma_0^0} }^{}\alpha_{i_j}\|x_1-y_{i_j}\|_\infty\\
		&<&\frac{\epsilon}{4}+\frac{\epsilon}{4}+\frac{\epsilon}{4}=\frac{3\epsilon}{4},
	\end{eqnarray*}
	since $\|x_{i_j}-x_0\|_\infty \le diam(\sigma_0)<\delta<\frac{\epsilon}{4}$ for all $0\le j\le m$ such that $x_{i_j}\in \sigma_0$, and $\|x_1-y_{i_j}\|_\infty \le diam(\sigma_0)<\delta<\frac{\epsilon}{4}$ for all $0\le j\le m$ such that $x_{i_j}\notin \sigma_0$ and $y_{i_j}\in \sigma_0^0$. This proves the claim.
	
	Now, define a function $f:I^m\to I^m$ by
	\begin{eqnarray*}
		f(x) = \left\{ \begin{array}{cll}
			f_1\circ \phi(x)&\text{if}& x\in \sigma_0,\\
			\phi(x)&\text{if}& x\notin \sigma_0.
		\end{array}\right.
	\end{eqnarray*}
	Since $f_1 \circ \phi (x)=x=\phi(x)$ for all $x\in  \partial \sigma_0\bigcap \partial(I^m\setminus \sigma_0)$, clearly $f$ is continuous on $I^m$.
	We prove that $\rho(f^2,h)<\epsilon$.  Consider an arbitrary $x\in I^m$.
	If $x\in \sigma_0$, then $f(x)\in \overline{I^m\setminus \sigma_0}$, because $\phi(x)\in \overline{I^m\setminus \sigma_0}$ and $f_1\big(\overline{I^m\setminus \sigma_0}\big)\subseteq \overline{I^m\setminus \sigma_0}$. Therefore $f^2(x)\in \sigma_0$, implying
	by \eqref{12} that
	\begin{eqnarray*}
		\|f^2(x)-h(x)\|_\infty
		\le \|f^2(x)-x_0\|_\infty+\|h(x_0)-h(x)\|_\infty
		<\frac{\epsilon}{4}+\frac{\epsilon}{4}=\frac{\epsilon}{2},
	\end{eqnarray*}
	since $\|f^2(x)-x_0\|_\infty\le
	diam(\sigma_0)<\delta<\frac{\epsilon}{4}$. If $x\notin \sigma_0$,
	then $f^2(x)=f_1(x)$, and  therefore $\|f^2(x)-h(x)\|_\infty =
	\|f_1(x)-h(x)\|_{\infty }<\frac{3\epsilon}{4}$. This completes the proof.
\end{proof}

Now it is a natural question to ask what happens if all the fixed
points of the map are in the interior of $I^m$. Theorem 10 of \cite[p.364]{Humke-Laczkovich} shows that
the map $f(x)=1-x$ on the unit interval $[0,1]$, with only $x=\frac{1}{2}$
as the fixed point, cannot be approximated by squares of continuous
maps. In contrast, we have the following.
\begin{example}
	In this example, we show that despite having no continuous square roots, the map $f:I^2\to I^2$ defined by
	\begin{eqnarray*}
		f(x,y) = \left(1-x, \frac{1}{2}\right), \quad \forall (x, y) \in I^2
	\end{eqnarray*}
	having an interior point $(x,y)=(\frac{1}{2},\frac{1}{2})$ as the unique fixed point can be approximated by squares of continuous maps on $I^2$.  To prove that $f$ has no continuous square roots, on the contrary, assume that $f=g^2$ for some $g\in \mathcal{C}(I^2)$. Since $(1-x, \frac{1}{2}) \in R(g)$ for each $x\in I$, there exists $(f_1(x), f_2(x))  \in I^2$ such that $g(f_1(x), f_2(x))=(1-x, \frac{1}{2})$. Then
	\begin{eqnarray*}
		g\left(1-x, \frac{1}{2}\right)=g^2(f_1(x), f_2(x))=f(f_1(x), f_2(x))=\left(1-f_1(x), \frac{1}{2}\right), \quad \forall x\in I,
	\end{eqnarray*}
	implying that
	\begin{eqnarray}\label{1-x}
	g\left(x, \frac{1}{2}\right)=\left(f_1(x), \frac{1}{2}\right), \quad \forall x\in I.
	\end{eqnarray}
	Therefore
	\begin{eqnarray*}
		\left(1-x, \frac{1}{2}\right)=f\left(x, \frac{1}{2}\right)= g^2\left(x, \frac{1}{2}\right)=g\left(f_1(x), \frac{1}{2}\right)=\left(f_1^2(x), \frac{1}{2}\right), \quad \forall x\in I,
	\end{eqnarray*}
	and hence $f_1^2(x)=1-x$ for all $x\in I$. Also, from \eqref{1-x} it follows that $f_1$ is continuous on $I$. Thus, the map $x\mapsto 1-x$ has a continuous square root on $I$, a contradiction to Corollary \ref{C1}. Hence $f$  has no continuous square roots on $I^2$.
	
	Now, to prove that $f$ can be approximated by squares of continuous
	maps,  consider an arbitrary $\epsilon>0$.  Without loss of
	generality, we assume that $\epsilon <\frac{1}{2}.$ The idea is to compress
	$I^2$ to the small strip $I\times [\frac{1}{2}-\frac{\epsilon}{2},
	\frac{1}{2}+\frac{\epsilon}{2}]$ and then rotate this strip 
	ninety  degrees clockwise with a  suitable scaling to stay within the strip.
	More explicitly,  let $g:I^2\to I\times
	[\frac{1}{2}-\frac{\epsilon}{2}, \frac{1}{2}+\frac{\epsilon}{2}]$ be
	defined by $g=g_1\circ g_2$, where $g_1:I\times
	[\frac{1}{2}-\frac{\epsilon}{2}, \frac{1}{2}+\frac{\epsilon}{2}] \to
	I\times [\frac{1}{2}-\frac{\epsilon}{2},
	\frac{1}{2}+\frac{\epsilon}{2}]$ and $g_2: I^2\to I\times
	[\frac{1}{2}-\frac{\epsilon}{2}, \frac{1}{2}+\frac{\epsilon}{2}]$
	are given by
	\begin{eqnarray*}
		g_1(x,y) = \left(\frac{\epsilon-1}{2\epsilon}, \frac{\epsilon+1}{2}\right)+(x,y)\left(\begin{array}{cc}
			0&-\epsilon \\
			\frac{1}{\epsilon} &0
		\end{array}\right)=\left(\frac{1}{2}+\frac{2y-1}{2\epsilon}, \frac{1}{2}- \frac{\epsilon(2x-1)}{2}\right)
	\end{eqnarray*}
	and
	\begin{eqnarray*}
		g_2(x,y) = \left\{ \begin{array}{cll}
			\left(x, \frac{1}{2}-\frac{\epsilon}{2}\right)&\text{if}& (x,y)\in I\times [0, \frac{1}{2}-\frac{\epsilon}{2}],\\
			(x, y)&\text{if} & (x,y)\in I\times [\frac{1}{2}-\frac{\epsilon}{2},\frac{1}{2}+\frac{\epsilon}{2}],\\
			\left(x, \frac{1}{2}+\frac{\epsilon}{2}\right)&\text{if}& (x,y)\in I\times [ \frac{1}{2}+\frac{\epsilon}{2},1].
		\end{array}\right.
	\end{eqnarray*}
	Then $g$ is continuous on $I^2$ such that
	\begin{eqnarray*}
		g^2(x,y) = \left\{ \begin{array}{cll}
			\left(1-x, \frac{1}{2}+\frac{\epsilon}{2}\right)&\text{if}& (x,y)\in I\times [0, \frac{1}{2}-\frac{\epsilon}{2}],\\
			(1-x, 1-y)&\text{if} & (x,y)\in I\times [\frac{1}{2}-\frac{\epsilon}{2},\frac{1}{2}+\frac{\epsilon}{2}],\\
			\left(1-x, \frac{1}{2}-\frac{\epsilon}{2}\right)&\text{if}& (x,y)\in I\times [ \frac{1}{2}+\frac{\epsilon}{2},1],
		\end{array}\right.
	\end{eqnarray*}
	and	it can be easily verified that $\rho(f,g^2)<\epsilon$.
\end{example}

\section{Continuous functions on $\mathbb{R}^m$}\label{sec4}

Consider $\mathbb{R}^m$ in the metric induced by the norm $\|\cdot \|_\infty$ defined as in \eqref{norm}. Since it is a locally compact separable metric space, by Corollary 7.1 of \cite{arens1946}, the compact-open topology on $\mathcal{C}(\mathbb{R}^m)$ is metrizable
with the metric $D$ given by
\begin{eqnarray}\label{D}
	D(f,g)=\sum_{j=1}^{\infty}\mu_j(f,g)
\end{eqnarray}
such that
\begin{eqnarray*}
	\mu_j(f,g)&=&\min\left\{\frac{1}{2^j}, \rho_j(f,g)\right\},\quad \forall j\in \mathbb{N},\\
	\rho_j(f,g)&=&\sup \{\|f(x)-g(x)\|_\infty: x\in M_j\},\quad \forall j\in \mathbb{N},
\end{eqnarray*}
where $(M_j)_{j\in \mathbb{N}}$ is a sequence of compact sets in
$\mathbb{R}^m$ satisfying  that
$\mathbb{R}^m=\bigcup_{j=1}^{\infty}M_{j}$, and
if $M$ is any compact subset of $\mathbb{R}^m$, then $M\subseteq \bigcup_{i=1}^{k}M_{m_i}$ for
some finitely many $M_{m_1}, M_{m_2}, \ldots, M_{m_k}$.
For convenience, we assume that  $M_j$ is an  $m$-simplex of
${\mathbb R}^m$ for each $j\in \mathbb{N}$, and either $M_i\cap M_j=\emptyset$ or
$M_i\cap M_j$ is a common face of both $M_i$ and $M_j$ for all $i,j\in
\mathbb{N}$.  Considering all the faces $M_j$ for all $j\in \mathbb{N}$, we get a countable triangulation of ${\mathbb R}^m.$
Throughout this Section, we fix one such triangulation of ${\mathbb R}^m.$

%
%

\begin{lemma}\label{refinement}
	
	%
	Let $\sigma =\langle x_0, x_1, \ldots, x_m\rangle$ be an $m$-simplex
	of  $\mathbb{R}^m$ and $z\in \sigma$ be such that  $z\ne x_j$ for all $0\le j\le m$. Then there exists a simplicial complex $\mathcal{L}_\sigma$ with the set of vertices $\{x_0, x_1, \ldots, x_m,
	z\}$   such that $| \mathcal{L}_\sigma| =\sigma$.  In other words, every
	$m$-simplex of $\mathbb{R}^m$ can be triangulated to have  a desired
	point
	as an  extra vertex.
\end{lemma}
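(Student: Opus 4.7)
The plan is to carry out a stellar subdivision of $\sigma$ at the point $z$. Writing $z = \sum_{i=0}^{m} \beta_i x_i$ in barycentric coordinates of $\sigma$ (so $\beta_i \ge 0$ and $\sum_i \beta_i = 1$), I first identify the minimal face of $\sigma$ in whose relative interior $z$ lies. Since $z$ is in $\sigma$ but is not a vertex, the index set $J := \{i : \beta_i > 0\}$ has cardinality at least two; relabel its elements as $J = \{i_0, i_1, \ldots, i_k\}$ with $1 \le k \le m$. Then $z$ lies in the relative interior of the face $\tau := \langle x_{i_0}, x_{i_1}, \ldots, x_{i_k}\rangle$ of $\sigma$.

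For each $j \in \{0, 1, \ldots, k\}$, define
\[
\sigma_j := \langle z, x_0, x_1, \ldots, \widehat{x_{i_j}}, \ldots, x_m\rangle,
\]
where the hat indicates that $x_{i_j}$ is omitted. Because $\beta_{i_j} > 0$, the point $z$ does not lie on the affine hyperplane spanned by $\{x_i : i \ne i_j\}$ (which is exactly the locus $\beta_{i_j} = 0$ inside the affine span of $\sigma$), so each $\sigma_j$ is a bona fide $m$-simplex. Let $\mathcal{L}_\sigma$ consist of $\sigma_0, \sigma_1, \ldots, \sigma_k$ together with all of their faces. Its vertex set is automatically $\{x_0, x_1, \ldots, x_m, z\}$: the point $z$ appears in every $\sigma_j$, and each $x_i$ appears in at least one $\sigma_j$ (in fact in all but at most one of them).

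Two properties remain to be verified. First, that $|\mathcal{L}_\sigma| = \sigma$: the inclusion $|\mathcal{L}_\sigma| \subseteq \sigma$ is immediate from convexity of $\sigma$, and for the reverse inclusion, given an arbitrary $p = \sum_{i=0}^m \gamma_i x_i$ in $\sigma$, I let $\lambda := \min_{0 \le j \le k} \gamma_{i_j}/\beta_{i_j}$, attained at some index $j^{*}$. Substituting $z = \sum_{l \in J} \beta_l x_l$ into the target equality $p = \lambda z + \sum_{i \ne i_{j^{*}}} \mu_i x_i$ and solving yields $\mu_i = \gamma_i - \lambda \beta_i$ (with the convention $\beta_i := 0$ for $i \notin J$); the $\mu_i$ are non-negative by the choice of $j^{*}$ and sum to $1$, so $p \in \sigma_{j^{*}}$. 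Second, for distinct $p, q \in \{0, \ldots, k\}$, the intersection $\sigma_p \cap \sigma_q$ must coincide with the common face spanned by the shared vertex set $\{z\} \cup \{x_l : l \ne i_p,\, l \ne i_q\}$.

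I expect this last intersection identity to be the main technical obstacle. Given a point $y \in \sigma_p \cap \sigma_q$ with a representation $y = \alpha z + \sum_{l \ne i_p} \mu_l x_l$ in $\sigma_p$ and $y = \alpha' z + \sum_{l \ne i_q} \mu'_l x_l$ in $\sigma_q$, substituting the $\sigma$-barycentric expression of $z$ into both produces two $\sigma$-barycentric representations of $y$; uniqueness of these coordinates forces $\mu_{i_q} = (\alpha' - \alpha)\beta_{i_q}$ and $\mu'_{i_p} = (\alpha - \alpha')\beta_{i_p}$. Since $\beta_{i_p}, \beta_{i_q} > 0$ and the $\mu, \mu'$ are non-negative, this yields $\alpha = \alpha'$, hence $\mu_{i_q} = 0 = \mu'_{i_p}$, which is precisely the statement that $y$ lies in the claimed common face. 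Once this intersection property is in place, $\mathcal{L}_\sigma$ is a simplicial complex with $|\mathcal{L}_\sigma| = \sigma$ and the prescribed vertex set.
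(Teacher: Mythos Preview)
Your proof is correct and follows essentially the same construction as the paper: both perform the stellar subdivision of $\sigma$ at $z$, replacing $\sigma$ by the $m$-simplices $\sigma_j$ obtained by swapping $z$ for each vertex $x_{i_j}$ with $\beta_{i_j}>0$. Your verification of $|\mathcal{L}_\sigma|=\sigma$ via the explicit choice $\lambda=\min_j \gamma_{i_j}/\beta_{i_j}$ and of the pairwise intersection $\sigma_p\cap\sigma_q$ via comparison of barycentric coordinates is in fact more detailed than the paper's, which dispatches both points with ``it is easily seen''; the only small thing you leave implicit is the standard reduction that once the top simplices $\sigma_p,\sigma_q$ meet in a common face, so do any of their subfaces.
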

\begin{proof}
	Without loss of generality, we assume that  $z\in \langle x_0, x_1,
	\ldots, x_k\rangle^0$ for some $1\le k \le m$, say
	$z=\sum_{j=0}^{k}\alpha_jx_j$ for some strictly positive real
	numbers $\alpha_j$ such that $\sum_{j=0}^{k}\alpha_j=1$. Then
	$V_i:=\{x_0, x_1,\ldots, x_{i-1}, z, x_{i+1},\ldots, x_m\}$ is
	geometrically independent for all $0\le i\le k$. In fact, for a
	fixed $0\le i\le k$, if $\beta_j$'s are arbitrary reals such that
	$\sum_{j=0}^{m}\beta_j=0$ and $$\sum\limits_{\substack{0\le j \le m\\
			j\ne i}}^{}\beta_jx_j+\beta_iz=0,$$ then
	\begin{eqnarray*}
		\sum_{\substack{0\le j \le k\\ j\ne i}}^{}(\beta_j+\beta_i\alpha_j)x_j+\beta_i\alpha_ix_i+\sum_{k< j \le m}^{}\beta_jx_j=0,
	\end{eqnarray*}
	and therefore by geometric independence of $\{x_0, x_1, \ldots, x_m\}$ we have $\beta_j+\beta_i\alpha_j=\beta_i\alpha_i=0$ for all $0\le j\le k$ with $j\ne i$, and $\beta_j=0$ for all $k<j\le m$. This implies that $\beta_j=0$ for all $0\le j\le m$, since $\alpha_i>0$.
	
	Now, let
	$\sigma _i:= \langle x_0, x_1, \ldots, x_{i-1}, z, x_{i+1}, \ldots x_m\rangle$
	and ${\mathcal L}_i$ be the simplicial complex of all
	faces of $\sigma _i$ for all $0\le i\le k$.
	
	\noindent {\bf Claim:}
	${\mathcal L}_\sigma= \bigcup
	_{i=0}^k{\mathcal L}_i$ is a triangulation of 
	$\sigma$ with the set of vertices $\{x_0, x_1, \ldots, x_m,
	z\}$. 
	
	Clearly, $\mathcal{L}_\sigma$ is a collection of simplices of $\mathbb{R}^m$ with $\{x_0, x_1, \ldots, x_m,
	z\}$ as its set of vertices. To show that it is a simplicial complex, consider an arbitrary $\lambda \in \mathcal{L}_\sigma$. Then $\lambda \in \mathcal{L}_i$ for some $0\le i\le k$, implying that $\kappa \in \mathcal{L}_i\subseteq \mathcal{L}_\sigma$ whenever $\kappa$ is a face of $\lambda$, because $\mathcal{L}_i$ is a simplicial complex. Next, consider any two simplices $\lambda, \eta \in \mathcal{L}_\sigma$. If $\lambda, \eta \in  \mathcal{L}_i$ for the same $i\in \{0,1,\ldots,k\}$, then clearly $\lambda \cap \eta$ is either empty or a common face of both  $\lambda$ and $\eta$, because $\mathcal{L}_i$ is a simplicial complex. So, let $\lambda \cap \eta \ne \emptyset$, and suppose that $\lambda \in \mathcal{L}_i$ and $\eta \in \mathcal{L}_j$ for some $i\ne j$, where $0\le i, j\le k$.  Further, assume that $\lambda$ and $\eta$ are $s$ and $t$ dimensional, respectively, for some $0\le s,t\le m$. Then it is easily seen $\lambda \cap \eta$ is the $l$-simplex with  vertex set  $V_i\cap V_j$, where $l=\#(V_i\cap V_j)-1$. This implies that $\lambda \cap \eta \in \mathcal{L}_i\cap \mathcal{L}_j$, and therefore it is a common face of both  $\lambda$ and $\eta$. Hence $\mathcal{L}_\sigma$ is a simplicial complex.
	
	Now, to prove that $\mathcal{L}_\sigma$ is a triangulation of $\sigma$, consider an arbitrary $x\in \sigma$. Then $x\in \lambda:=\langle x_{i_0}, x_{i_1}, \ldots, x_{i_s}\rangle$ for some $s$-dimensional face $\lambda$ of $\sigma$, where $0\le s\le m$. If $\lambda\in \mathcal{L}_i$ for some $0\le i\le k$, then clearly $x\in |\mathcal{L}_i|\subseteq  |\mathcal{L}_\sigma|$. If $\lambda\notin \mathcal{L}_i$ for all $0\le i\le k$, then
	there exist at least two $i\in \{0, 1, \ldots, k\}$ such that
	$(V_\lambda\setminus \{z\})\cap (V_i\setminus\{z\})\ne \emptyset$, where $V_\lambda=\{x_{i_0}, x_{i_1}, \ldots, x_{i_s}\}$. This implies that $x\in \lambda \subseteq \cup_{i\in\Gamma}|\mathcal{L}_i|\subseteq |\mathcal{L}_\sigma|$, where $\Gamma:=\{j: 0\le j\le k~\text{and}~(V_\lambda\setminus \{z\})\cap (V_i\setminus\{z\})\ne \emptyset\}$.
	Thus the claim holds and result follows.
\end{proof}

\begin{lemma}\label{tri-Mj}
	Let $\mathbb{R}^m=\bigcup_{j=1}^{\infty}M_j$ be the decomposition of $\mathbb{R}^m$ fixed above, and
	let  $\delta_1, \delta_2, \ldots$ be a sequence of positive real numbers. Then there exists
	a countable triangulation $\mathcal{K}= \bigcup _{j=1}^{\infty}{\mathcal K}_j$ of $\mathbb{R}^m$ such that  ${\mathcal K}_j$ is a triangulation
	of $M_j$ and $diam(\sigma)<\delta_j$ for all $\sigma \in {\mathcal{K}_j}$ and $j\in \mathbb{N}$.
\end{lemma}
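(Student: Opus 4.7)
The plan is to avoid a purely sequential construction (which struggles to reconcile different mesh requirements of neighbouring $M_j$'s) by triangulating every face of the fixed background triangulation $\mathcal{K}_0$ of $\mathbb{R}^m$ (whose $m$-simplices are the $M_j$'s, as set up immediately before the lemma) simultaneously, with mesh control that anticipates the requirements of every $M_j$ containing that face. For each face $F$ of $\mathcal{K}_0$, put
\[
\delta_F := \min\{\delta_j : F \subseteq M_j\}.
\]
The local finiteness of $\mathcal{K}_0$ (an immediate consequence of the compactness-covering hypothesis on the $M_j$'s) ensures that each face $F$ lies in only finitely many $M_j$'s, so $\delta_F > 0$; moreover $\delta_F \le \delta_j$ whenever $F$ is a face of $M_j$, with $\delta_{M_j} = \delta_j$. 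If I can construct a triangulation $\mathcal{T}_F$ of each face $F$ with $mesh(\mathcal{T}_F) < \delta_F$, compatible under inclusion in the sense that $\mathcal{T}_{F'} \subseteq \mathcal{T}_F$ whenever $F' \subseteq F$, then $\mathcal{K}_j := \bigcup\{\mathcal{T}_F : F \text{ is a face of } M_j\}$ is a triangulation of $M_j$ with mesh less than $\delta_j$, and the compatibility forces $\mathcal{K} := \bigcup_j \mathcal{K}_j$ to be a simplicial complex.

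I would build the $\mathcal{T}_F$ by induction on $\dim F$. Take $\mathcal{T}_F = \{F\}$ when $F$ is a vertex. For $\dim F = k \ge 1$, the induction hypothesis yields pairwise-compatible triangulations of the proper faces of $F$, which glue to a triangulation $\mathcal{T}_{\partial F}$ of $\partial F$ with mesh less than $\delta_F$, and it remains to extend $\mathcal{T}_{\partial F}$ to a triangulation of $F$ whose mesh is still less than $\delta_F$. This extension step is the main obstacle. I would first cone $\mathcal{T}_{\partial F}$ from a chosen interior point $z \in F^0$, replacing each $\sigma \in \mathcal{T}_{\partial F}$ by the simplex spanned by $\sigma \cup \{z\}$; this produces a triangulation of $F$ that restricts to $\mathcal{T}_{\partial F}$ on $\partial F$ but whose cone simplices may have diameter much larger than $\delta_F$. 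I would then iteratively subdivide only the cone simplices, using Lemma \ref{refinement} (applied inside the affine span of each cone simplex when $k < m$) to introduce a new interior vertex into any simplex whose diameter still exceeds $\delta_F$. Choosing the new vertex at the midpoint of the longest interior edge ensures that no existing edge is lengthened and the maximum offending diameter strictly decreases. Since the boundary edges already satisfy the mesh bound and $\partial F$ is left untouched by these refinements, finitely many iterations yield the desired $\mathcal{T}_F$, completing the induction and thus the construction of $\mathcal{K}$.
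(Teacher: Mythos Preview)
Your approach is structurally different from the paper's: the paper first barycentrically subdivides each $M_j$ independently to the required mesh and then uses Lemma~\ref{refinement} to insert, into each resulting $m$-simplex, the finitely many foreign vertices inherited from the subdivisions of neighbouring $M_i$'s; you instead build compatible triangulations of all faces of the background complex by induction on dimension. The inductive framework and the coning idea for the extension step are sound, but the termination argument for your interior refinement has a genuine gap.

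The claim that inserting the midpoint of the longest interior edge makes ``the maximum offending diameter strictly decrease'' is false. If several edges share the maximal length $L$, bisecting one of them leaves the others intact, so the maximum stays at $L$. Worse, in the $\|\cdot\|_\infty$ norm used here a new edge $wc$ (with $w$ the midpoint of $ab$) can satisfy $\|w-c\|_\infty=\max(\|a-c\|_\infty,\|b-c\|_\infty)$: take $a=(0,0)$, $b=(2,0)$, $c=(1,2)$, so that all three edges of $\langle a,b,c\rangle$ have $\ell^\infty$-length $2$, and after bisecting $ab$ the edge from $(1,0)$ to $c$ still has length $2$. Even in the Euclidean norm, where the new edges are strictly shorter than $L$, your argument only shows that the maximum edge length is non-increasing, not that it drops below $\delta_F$ after finitely many steps: the number of edges grows with each bisection, and the decreasing sequence of successive maxima could in principle accumulate above $\delta_F$. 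Termination of longest-edge bisection is in fact a classical but non-trivial theorem in computational geometry. To close the gap you should either invoke that result explicitly, or replace the iterative bisection by a direct construction that visibly terminates.
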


\begin{proof}
	Perform barycentric subdivision of the $m$-simplex $M_j$ of sufficient order
	to get a simplicial complex $\mathcal{L}_j$ such that
	$diam(\sigma)<\delta_j$ for all $\sigma \in \mathcal{L}_j$ and $j\in
	\mathbb{N}$. Then $\mathcal{L}:=\bigcup_{j=1}^{\infty}\mathcal{L}_j$
	is not necessarily a simplicial complex, and therefore need not be a
	countable triangulation of $\mathbb{R}^m$, as neighboring $M_j$'s
	might have undergone barycentric subdivisions of different orders.
	Let $\{\sigma _1, \sigma _2, \ldots \}$ be the collection set of all
	$m$-simplices in $\mathcal{L}$ and  $V$ be the set of vertices of
	all simplices in $\mathcal {L}$. Observe that  $V$ may
	contain some points of  $\sigma_i$ that are not its vertices for each $i\in \mathbb{N}$ due to
	the barycentric subdivision of its neighboring $m$-simplices. However, there are at
	most finitely many such points of each $\sigma_i$, since every
	compact subset of ${\mathbb R}^m$ is contained in a finite union of
	$M_j$'s.
	Now, using Lemma \ref{refinement}, we
	refine $\sigma _1$ inductively so that we have a triangulation of $\sigma _1$, which includes all these points as vertices. Next, we
	consider $\sigma _2$ and repeat the same procedure. Note that these refinements add new simplices but do not increase the number of vertices. So, the refinement of $\sigma _2$ does not disturb that of $\sigma _1$ done before. Continuing in this way, we obtain a triangulation $\mathcal{K}_j$ of $M_j$ for each $j\in \mathbb{N}$ so that   $\mathcal{K}=\bigcup_{j=1}^{\infty}\mathcal{K}_j$ is a countable triangulation of ${\mathbb R}^m$ with the set of
	vertices $V$. This completes the proof.
\end{proof}

Having the above lemmas,
we are ready to prove our desired result.

\begin{theorem}\label{Thm2}
	If $h\in \mathcal{C}(\mathbb{R}^m)$, then for each $\epsilon
	>0$ the ball $B_{\epsilon }(h)$ of radius $\epsilon$ around $h$ contains a map $f$
	having no (even discontinuous) iterative square roots on $\mathbb{R}^m$. In particular,
	$\mathcal{W}(2;\mathbb{R}^m)$
	does not contain any ball of $\mathcal{C}(\mathbb{R}^m)$.
\end{theorem}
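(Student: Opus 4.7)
The plan is to transplant the construction of Theorem \ref{Thm1} to the non-compact space $\mathbb{R}^m$, exploiting the fact that the compact-open metric $D$ is controlled, up to an arbitrarily small tail, by finitely many of the seminorms $\rho_j$. Given $h\in\mathcal{C}(\mathbb{R}^m)$ and $\epsilon>0$, I first pick $N\in\mathbb{N}$ so that $\sum_{j>N}2^{-j}<\epsilon/2$; thereafter, since $D(f,h)\leq\sum_{j\leq N}\rho_j(f,h)+\epsilon/2$, it suffices to produce $f$ uniformly close to $h$ on the compact set $K_N:=\bigcup_{j\leq N}M_j$. The uniform continuity of $h$ on $K_N$ yields a modulus $\delta>0$.

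Using Lemma \ref{tri-Mj} with sufficiently small $\delta_j$ on each $M_j$, I obtain a countable triangulation $\mathcal{K}=\bigcup_j\mathcal{K}_j$ of $\mathbb{R}^m$ refining the fixed decomposition into $M_j$'s, of mesh $<\delta/4$ on each $M_j$ with $j\leq N$. Enumerating its vertices as $x_0,x_1,\ldots$ and applying Lemma \ref{L5}, I pick image values $y_i$ with $\|y_i-h(x_i)\|_\infty$ small (say, $<\epsilon/(20N)$ when $x_i\in K_N$) and with every $m+1$ of the $y_i$'s geometrically independent. The piecewise affine linear extension $f_0$ defined by \eqref{f_0} then satisfies $\rho_j(f_0,h)<\epsilon/(4N)$ for $j\leq N$, has $R(f_0)^0\neq\emptyset$, and by Lemma \ref{L1}(i) is injective on every $m$-simplex of $\mathcal{K}$. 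Next I execute the local surgery of Steps 2--4 of Theorem \ref{Thm1} entirely inside a single $m$-simplex $\Delta\in\mathcal{K}$ lying, say, in $M_1$: after passing to a sufficiently fine barycentric subdivision $\mathcal{K}^{(s)}$ so that $\mathrm{diam}(f_0(\Delta))<\epsilon/(4N)$, I locate $\sigma_0\subseteq\Delta^0\cap R(f_0)^0$ with simplicial neighborhood $A$ satisfying $f_0^{-1}(\sigma_0)\cap A=\emptyset$ and $f_0(A)\cap A=\emptyset$, and I modify $f_0$ at the vertices of $\sigma_0$ to obtain $f$ that equals a constant $v_0$ on $\sigma_0$, agrees with $f_0$ off $A\subseteq\Delta\subseteq M_1$, and is piecewise affine linear throughout. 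Because the perturbation is supported in $M_1$, no $\rho_j(f,f_0)$ exceeds $\mathrm{diam}(f_0(\Delta))$, and hence $D(f,h)<\epsilon$.

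To conclude that $f$ has no iterative square root, I apply Case (iii) of Theorem \ref{T3}: $f^{-2}(v_0)\supseteq f_0^{-1}(\sigma_0)$ is uncountable since $\sigma_0$ is a full $m$-simplex contained in $R(f_0)^0$ and $f_0$ is piecewise linearly injective off $A$, while for every $x\neq v_0$ the preimage $f^{-1}(x)$ is a countable union, over the countably many $m$-simplices of $\mathcal{K}^{(s)}$ different from $\sigma_0$, of at-most-countable fibres, hence itself countable. The main obstacle I anticipate is precisely this last countability check: whereas the compact domain $I^m$ in Theorem \ref{Thm1} permitted the stronger Case (ii) of Theorem \ref{T3} (finite preimages), on the countably-simplicial $\mathbb{R}^m$ one is forced into Case (iii), and one must verify with care --- especially on the finitely many simplices adjacent to $\sigma_0$ that were altered by the surgery, where affine fibres of $f$ could a priori be positive-dimensional --- that each $f^{-1}(x)$ with $x\neq v_0$ genuinely remains countable, all while keeping the perturbation uniformly small across all of the relevant $M_j$ with $j\leq N$.
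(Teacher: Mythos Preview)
Your plan coincides with the paper's proof almost verbatim; the only difference is bookkeeping (you truncate the metric at level $N$ and control only the seminorms $\rho_1,\dots,\rho_N$, while the paper assigns decaying tolerances $\epsilon/2^{j+c}$ across all $M_j$), and both routes culminate in Case~(iii) of Theorem~\ref{T3} applied to the same piecewise affine surgery.

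The obstacle you isolate is real, and in fact the paper does not resolve it either. The paper's Step~5 asserts, via Lemma~\ref{L1}(i), that $f|_\sigma$ is injective for every $\sigma\in\mathcal{K}^{(s)}$ with $\sigma\neq\sigma_0$; but once $m\geq 2$ this is false on any $m$-simplex $\sigma'$ sharing with $\sigma_0$ a face of dimension $k\geq 1$, since then $k+1$ vertices of $\sigma'$ are all sent to $v_0$ and $f|_{\sigma'}$ has $k$-dimensional affine fibres --- so for $x\in f(\sigma')\setminus\{v_0\}$ the set $f^{-1}(x)$ is already uncountable and Case~(iii) of Theorem~\ref{T3} does not apply to the map as written. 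A clean repair, which leaves everything else in your outline intact, is to make the collapse non-simplicial: take $f=f_0\circ\psi$, where $\psi$ is the identity outside a small ball $\overline{B_R(c)}\subset\sigma_0^0$ and a radial ``crush'' collapsing $\overline{B_r(c)}$ to its centre $c$ while mapping the shell $r\leq\|x-c\|\leq R$ homeomorphically onto $0<\|x-c\|\leq R$. Then $\psi^{-1}(y)$ is a singleton for every $y\neq c$, so $f^{-1}(x)=\psi^{-1}(f_0^{-1}(x))$ is countable for each $x\neq v_0:=f_0(c)$; the inclusion $f^{-1}(v_0)\supseteq\overline{B_r(c)}\subset R(f_0)^0$ makes $f^{-2}(v_0)\supseteq\psi^{-1}\bigl(f_0^{-1}(\overline{B_r(c)})\bigr)$ uncountable; and choosing $c$ off the proper affine locus $\{z\in\sigma_0^0:f_0^2(z)=f_0(z)\}$ (proper because $f_0$ restricted to $\Delta_1$ is not the identity) ensures $f(v_0)\neq v_0$. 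With this modification your argument goes through.
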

\begin{proof}
	Let $h\in \mathcal{C}(\mathbb{R}^m)$ and $\epsilon>0$ be arbitrary. 	Our strategy to prove the existence of an  $f\in B_\epsilon(h)$ with no square roots is to make use of
	Theorem \ref{T3}, and we construct this $f$ in a few steps as done in Theorem \ref{Thm1}.
	
	\noindent {\it Step 1: Construct a
		piecewise affine linear map $f_0\in B_\epsilon(h)$ supported by some suitable triangulation $\mathcal{K}$ of
		$\mathbb{R}^m$.}
	
	
	Consider the decomposition
	$\mathbb{R}^m=\bigcup_{j=1}^{\infty}M_{j}$ of $\mathbb{R}^m$ fixed above. Since $h|_{M_j}$ is uniformly continuous
	there exists $\delta_j>0$ such that
	\begin{eqnarray}\label{1'}
		\|h(x)-h(y)\|_\infty <\frac{1}{2^{j+5}}~~\text{whenever}~~x, y\in M_j~~\text{with}~~\|x-y\|_\infty<\delta_j
	\end{eqnarray}
	for each $j\in \mathbb{N}$. Also,
	by Lemma \ref{tri-Mj} there exists a triangulation ${\mathcal{K}}$ of $\mathbb{R}^m$ such that
	%
	$diam(\sigma)<\frac{\delta _j}{4}$  for all 
	$m$-simplex  $\sigma \in{\mathcal{K}}$ contained in $M_j$ and for all  $j\in
	\mathbb{N}$.
	
	Let   $x_0, x_1, \ldots $ be the  vertices of ${\mathcal K} .$
	Choose
	$y_0, y_1, y_2,\ldots$  in $\mathbb{R}^m$ inductively such that $y_j\ne x_j$ and
	\begin{eqnarray}\label{10}
		\|h(x_j)-y_j\|_\infty <\frac{\epsilon }{2^{j+6}}~~\text{for all}~~j\in \mathbb{Z}_+,
	\end{eqnarray}
	and $\{y_{i_0}, y_{i_1}, \ldots, y_{i_m}\}$ is geometrically
	independent whenever $\langle x_{i_0}, x_{i_1},\dots, 
	x_{i_m}\rangle \in {\mathcal{K}}$ for $i_0, i_1,$  $ \ldots,$ $ i_m\in \mathbb{Z}_+$.
	This is possible by Lemma \ref{L5}. Here, while choosing $y_j$'s using Baire Category theorem
	with $y_j\in B_{\frac {\epsilon}{2^{j+6}} }(h(x_j))$
	for all $j\in \mathbb{Z}_+$, satisfying the geometric independence
	condition,  it is also possible to ensure that $y_j\neq x_j$ for all
	$j\in \mathbb{Z}_+$. Then, for each $j\in \mathbb{N}$ we have
	\begin{eqnarray}
		\|y_{i_0}-y_{i_1}\|_\infty&\le& \|y_{i_0}-h(x_{i_0})\|_\infty+\|h(x_{i_0})-h(x_{i_1})\|_\infty+
		\|h(x_{i_1})-y_{i_1}\|_\infty\nonumber \\
		&<&  \frac{\epsilon }{2^{j+5}}+2\frac{\epsilon }{2^{j+6}}=\frac{\epsilon }{2^{j+4}},  \label{8} 
	\end{eqnarray}
	whenever  $\langle x_{i_0},x_{i_1}\rangle$ is a $2$-simplex of
	$\sigma$ for all $m$-simplex $\sigma \in {\mathcal{K}}$ contained in
	$M_j$.  Let $f_0:|{\mathcal{K}}|=\mathbb{R}^m \to \mathbb{R}^m$ be
	the map $f_{U, V}$ defined as in \eqref{f_0}, where $U$ and $V$ are
	the ordered sets $\{x_0, x_1, \ldots\}$ and  $\{y_0, y_1, \ldots\}$,
	respectively. Then $f_0$ is a continuous piecewise affine linear
	self-map of  $\mathbb{R}^m$ satisfying that $f_0(x_j)=y_j$ for all $j\in
	\mathbb{Z}_+$. Also,  by using result  (i) of Lemma \ref{L1}, we
	have $R(f_0)^0\ne \emptyset$, and $f_0|_\sigma$ is injective for all
	$\sigma \in {\mathcal{K}}$ contained in $M_j$ for all $j\in
	\mathbb{N}$. Further,
	\begin{eqnarray}\label{9}
		\mbox{diam}(f_0(\sigma ))<\frac{\epsilon }{2^{j+4}}~~\text{for all}~
		\sigma \in {\mathcal{K}}~\text{contained in}~M_j~\text{and for all}~j\in \mathbb{N}.
	\end{eqnarray}
	
	\noindent{\bf Claim:} $D(f_0,h)<\frac{\epsilon}{4}$.
	
	Consider arbitrary $j\in \mathbb{N}$ and $x\in M_j$. Then $x$ is in
	the interior of an unique $k$-simplex $\langle x_{i_0}, x_{i_1}, \dots,
	x_{i_k}\rangle$  in ${\mathcal{K}}$ for some $0\le k\le m$. Let
	$x=\sum_{j=0}^{k}\alpha_{i_j}x_{i_j}$ with
	$\sum_{j=0}^{k}\alpha_{i_j}=1$.
	Then by \eqref{1'} and \eqref{10}, we have
	\begin{eqnarray}
		\|f_0(x)-h(x)\|_\infty&=&\left\|\sum_{j=0}^{k}\alpha_{i_j}y_{i_j}-\sum_{j=0}^{k}\alpha_{i_j}h(x)\right\|_\infty \nonumber\\
		&\le & \sum_{j=0}^{k}\alpha_{i_j}\left\{\|y_{i_j}-h(x_{i_j})\|_\infty+\|h(x_{i_j})-h(x)\|_\infty\right\}\nonumber\\
		&< & \frac{\epsilon}{2^{j+6}}+\frac{\epsilon}{2^{j+5}}\nonumber \\&<&
		\frac{\epsilon}{2^{j+4}}, \label{11}
	\end{eqnarray}
	since $\|x-x_{i_j}\|_\infty\le diam(\langle x_{i_0}, x_{i_1}, \dots, x_{i_k}\rangle)<\delta_j$ for all $0\le j\le k$. Therefore $\rho_j(f_0, h)\le \frac{\epsilon}{2^{j+4}}<\frac{\epsilon}{2^{j+2}}$ for all $j\in \mathbb{N}$, implying that $D(f_0, h)< \sum_{j=1}^{\infty}\frac{\epsilon}{2^{j+2}}=\frac{\epsilon}{4}$.
	
	\noindent {\it Step 2: Obtain  an $m$-simplex $\Delta \in {\mathcal {K}}$ contained in
		$M_r$ for some $r\in \mathbb{N}$ containing a non-empty open set $Y\subseteq \Delta ^0\cap
		R(f_0)^0$ such that $f_0(Y)\subseteq \Delta _1^0$ for some
		$m$-simplex $\Delta _1$ in ${\mathcal K}$.}
	
	Consider any $m$-simplex $\sigma =\langle x_{i_0}, x_{i_1},\ldots, x_{i_m}\rangle$ in $\mathcal{K}$ contained in $M_r$ for some $r\in \mathbb{N}$. Since $\{y_{i_0}, y_{i_1},\ldots, y_{i_m}\}$ is geometrically
	independent by construction, we have $f_0(U)$ is open in $\mathbb{R}^m$ for each open subset $U$ of
	$\sigma^0$. Choose a sufficiently small  non-empty open subset	$Y_0$ of $f(\sigma )^0$ such that $Y_0\subseteq \Delta^0$ for some
	$m$-simplex $\Delta \in \mathcal {K}$. Then $f_0(Y_0)$ 
	has non-empty interior. Again, choose a  non-empty open subset  $Y$ of $Y_0$ so small
	that $f_0(Y)\subseteq \Delta _1$ for some
	$m$-simplex $\Delta _1\in {\mathcal K}.$ This completes the proof of
	Step 2.


	\noindent {\it Step 3: Show that there exists a barycentric subdivision $\mathcal {K}^{(s)}$ of ${\mathcal {K}}$ for some
		$s \in \mathbb {N}$ with an $m$-simplex  $\sigma_0\in \mathcal{K}^{(s)}$  such that $\sigma_0\subseteq Y$, and $f_0^{-1}(\sigma_0)\cap \sigma=\emptyset$ and
		$f_0(\sigma_0)\cap \sigma=\emptyset$ for all $\sigma\in \mathcal{K}^{(s)}$ with $\sigma\cap\sigma_0\ne
		\emptyset$.}
	
	The proof is similar to that of Step 3 of Theorem \ref{Thm1}.
	
	\noindent {\it Step 4: Modify $f_0$ slightly to get a new piecewise affine
		linear map $f\in B_\epsilon (h)$ that is constant on $\sigma
		_0.$}
	
	The proof is similar to that of Step 4 of Theorem \ref{Thm1}, however we give
	the details here for clarity.  	To retain continuity, when we modify $f_0$ on $\sigma _0$,  we must also modify it on all $m$-simplices in $\mathcal{K}^{(s)}$ adjacent to it. 
	Let $A:=\bigcup_{j=0}^{p}\sigma_j$,
	where $\sigma_j$ is precisely an $m$-simplex in
	$\mathcal{K}^{(s)}$ such that $\sigma_j\cap \sigma_0\ne \emptyset$ for all $0\le j\le p$.
	Then, clearly $A\subseteq V\subseteq \Delta
	^0$.
	
	Let the vertices of $\sigma _0$ be  $u_0,\ldots, u_{m-1}$ and $u_m$, and
	let
	$v_j:=f_0(u_j)$ for all $0\le j \le m$.  Then  $v_j\ne u_j$ for all $0\le j\le m$, because $u_j\in V$ for all $0\le j\le m$ and $f_0(V)\cap V=\emptyset$. Also,
	as $u_0, u_1, \ldots , u_m$ are geometrically
	independent vectors contained in $\Delta $, by result  (ii) of Corollary \ref{C3}, we see that $v_0, \ldots , v_m$ are geometrically independent. Further, by Step 2,
	they are all contained in the interior of a single simplex $\Delta
	_1$ of ${\mathcal{K}}$. Hence, by result  (iii) of Corollary \ref{C3}, we have $f_0(v_j)\neq v_j$ for some $0\le j\le m$. Without loss of generality, we assume that $f_0(v_0)\neq
	v_0.$

	Extend $\{u_0, \ldots, u_m\}$ to  $\{u_0, u_1, \ldots\}$   to include the vertices of all the simplices of ${\mathcal K}^{(s)}$ and let $v_j:=
	f_0(u_j)$ for all $j \in \mathbb{Z}_+$. Now, define $f$ by
	\begin{eqnarray*}
		f(u_j) = \left\{ \begin{array}{lll}
			v_0&\text{if}& 0\leq j\leq m,\\
			v_j&\text{if}& j>m,\end{array}\right.
	\end{eqnarray*}
	and extend it countably piecewise affine
	linearly to whole of $|{\mathcal K}^{(s)}|= \mathbb{R}^m$. Then $f_0$ gets
	modified only on $A$ and  $f(x)=v_0$ for all $x\in \sigma _0$.
	
	Consider any arbitrary $x\in A$.
	Then $x\in \sigma_{j}$ for some $0\le j\le p$. Let $\sigma_{j}=\langle u_{j_0}, u_{j_1}, \ldots, u_{j_m}\rangle$
	and $x=\sum_{i=0}^{m}\alpha_{j_i}u_{j_i}$ with $\sum_{i=0}^{m}\alpha_{j_i}=1$.
	Then by \eqref{9}, we have
	\begin{eqnarray*}
		\|f(x)-f_0(x)\|_\infty\le \sum_{i=0}^{m}\alpha_{j_i}\|f(u_{j_i})-f_0(u_{j_i})\|_\infty
		\le \sum_{i=0}^{m}\alpha_{j_i}\mbox{diam}f_0(\Delta )
		<  \frac{\epsilon}{2^{r+4}},
	\end{eqnarray*}
	implying by \eqref{11} that
	\begin{eqnarray*}
		\|f(x)-h(x)\|_\infty\le \|f(x)-f_0(x)\|_\infty+\|f_0(x)-h(x)\|_\infty
		< \frac{\epsilon}{2^{r+4}}+\frac{\epsilon}{2^{r+4}}<\frac{\epsilon}{2^{r+3}}.
	\end{eqnarray*}
	Therefore $\rho_r(f, h)< \frac{\epsilon}{2^{r+2}}$. Also, we have $\rho_j(f, h)=\rho_j(f_0,h)<\frac{\epsilon}{2^{j+2}}$ for $j\ne r$. Hence $D(f, h)< \sum_{j=1}^{\infty}\frac{\epsilon}{2^{j+2}}=\frac{\epsilon}{4}$, proving that $f\in B_\epsilon(h)$.
	
	\noindent {\it Step 5. Prove that $f$ has no square
		roots in $\mathcal{C}(\mathbb{R}^m)$.}
	
	It is clear that $f(v_0)\neq v_0$, because $A\subseteq V$,  $f=f_0$ on $A^c$ and $f_0(V)\cap V=\emptyset$.
	Also, by Step
	3, we have $f_0^{-1}(\sigma _0)\cap \sigma =\emptyset $ for all $\sigma \in
	{\mathcal K}^{(s)}$ with $\sigma \cap \sigma _0\neq \emptyset$, implying that $f=f_0$  on $f_0^{-1}(\sigma _0)$. Further, $f_0^{-1}(\sigma_0)$ is uncountable, since $\sigma_0$ is uncountable and $\sigma_0\subseteq R(f_0)^0$.
	Therefore, as $f^{-2}(v_0)\supseteq f^{-1}(f^{-1}(v_0))\supseteq
	f^{-1}(\sigma _0)\supseteq f_0^{-1}(\sigma_0)$, it follows that $f^{-2}(v_0)$ is uncountable.
	Moreover, $f|_\sigma$ is
	injective for every $\sigma \in \mathcal{K}^{(s)}$ with $\sigma \neq
	\sigma _0$  by result  (i) of Lemma \ref{L1}, and hence   $f^{-1}(x)$ is countable for all $x\ne
	v_0$ in $\mathbb{R}^m$.
	Thus,
	$f$ satisfies the hypothesis of Case  (iii) of
	Theorem \ref{T3}, proving that $f$ has no square roots in $\mathcal{C}(\mathbb{R}^m)$. 
\end{proof}

We have seen in  Theorem \ref{Thm1} (resp. \ref{Thm2})  that each open
neighborhood of each map in  $\mathcal{C}(I^m)$ (resp. in
$\mathcal{C}(\mathbb{R}^m)$) has a continuous map which
does not have even discontinuous square roots. Additionally, if $X$ and $Y$  are
locally compact Hausdorff spaces,  and $\phi:X\to Y$ is a
homeomorphism, then the map $f\mapsto  \phi \circ f\circ \phi ^{-1}$ is a
homeomorphism of $\mathcal{C}(X)$  onto $\mathcal{C}(Y)$, where both $\mathcal{C}(X)$  and $\mathcal{C}(Y)$ have the compact-open topology, and moreover
$f=g^2$ on $X$ if and only if $\phi \circ f\circ \phi ^{-1} = (\phi \circ
g\circ \phi ^{-1})^2$ on $Y$. Hence it follows that Theorem \ref{Thm1} (resp. Theorem \ref{Thm2}) is true
if $I^m$ (resp.  $\mathbb{R}^m$) is replaced by any topological space
homeomorphic to it. For the same reason, Theorem \ref{boundary
	theorem} is also true if $I^m$ is replaced by any topological space
homeomorphic to it.

\section{$L^p$ denseness of iterative squares in $\mathcal{C}(I^m)$}\label{sec5}
In this section, we prove that the iterative squares of continuous self-maps of $I^m$ are $L^p$ dense in $\mathcal{C}(I^m)$.
Given any nontrivial compact subintervals $I_1, I_2, \ldots, I_m$ of $I$, let $I_1\times I_2\times \cdots \times I_m$ denote the closed rectangular region in $I^m$ defined by  $$I_1\times I_2\times \cdots \times I_m=\{(x_1,x_2, \ldots, x_m): x_i\in I_i ~\text{for}~1\le i \le m\}.$$
Consider $\mathcal{C}(I^m)$ in the  $L^p$ norm defined by
\begin{eqnarray*}
	\|f\|_p=\left(\int_{I^m}^{} \|f(x)\|_{\infty}^pd\mu(x) \right)^\frac{1}{p},
\end{eqnarray*}
where $\mu $ is the Lebesgue measure on ${\mathbb R}^m$.
First, we prove a result which gives a sufficient condition for extending a given function to a square.
\begin{theorem}\label{T4}
	Let $K$ be a proper closed subset of $I^m$ and $f:K\to I^m$ be continuous. Then there exists a $g\in \mathcal{C}(I^m)$ such that $f=g^2|_K$.
\end{theorem}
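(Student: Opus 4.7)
The plan is to construct $g$ so that $K$ is first sent by $g$ to a disjoint ``shadow copy'' of itself inside $I^m\setminus K$, and then that shadow copy is sent by $g$ to $f(K)$, so that the composition $g\circ g$ reproduces $f$ on $K$. The extension to the rest of $I^m$ can then be handled abstractly, since $I^m$ is normal and the Tietze extension theorem is available.

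\medskip

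In detail, I would proceed as follows. Since $K$ is a proper closed subset of $I^m$, pick a point $x_0 \in I^m\setminus K$ and a radius $r>0$ with $\overline{B_r(x_0)}\subseteq I^m\setminus K$. Choose an affine injection $\phi\colon I^m\to \overline{B_r(x_0)}$, for instance a contraction of the form $\phi(x)=x_0+\lambda(x-c)$ with $\lambda>0$ and $c$ chosen so that $\phi(I^m)\subseteq \overline{B_r(x_0)}$. Set $K':=\phi(K)$. Since $\phi$ is a homeomorphism onto its image, $\phi|_K\colon K\to K'$ is a homeomorphism, and by construction $K\cap K'=\emptyset$ and $K\cup K'$ is closed in $I^m$.

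Next, define a continuous map $g_0\colon K\cup K'\to I^m$ by
\begin{eqnarray*}
g_0(x) = \left\{\begin{array}{cl}
\phi(x) & \text{if } x\in K,\\
f(\phi^{-1}(x)) & \text{if } x\in K'.
\end{array}\right.
\end{eqnarray*}
Since $K$ and $K'$ are disjoint closed sets and the definitions are continuous on each, the pasting lemma gives $g_0\in \mathcal{C}(K\cup K', I^m)$. Note that $g_0(K)=K'$, and for $x\in K$ we have $g_0(g_0(x))=g_0(\phi(x))=f(\phi^{-1}(\phi(x)))=f(x)$, so $g_0^{\,2}|_K=f$.

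\medskip

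Finally, extend $g_0$ to all of $I^m$. Applying the Tietze extension theorem coordinatewise to $g_0\colon K\cup K'\to I^m\subset \mathbb{R}^m$ produces a continuous extension $\tilde g\colon I^m\to \mathbb{R}^m$, and by composing with the coordinatewise retraction $r(x_1,\ldots,x_m)=(\max\{0,\min\{1,x_i\}\})_{i=1}^m$ of $\mathbb{R}^m$ onto $I^m$, we obtain $g:=r\circ\tilde g\in\mathcal{C}(I^m)$. Since $g_0$ already takes values in $I^m$, $g$ agrees with $g_0$ on $K\cup K'$. In particular, for every $x\in K$, $g(x)=\phi(x)\in K'$, so $g(g(x))=g_0(g_0(x))=f(x)$, i.e.\ $f=g^2|_K$, as required.

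\medskip

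There is no real obstacle here: the only subtle point is guaranteeing that the extension of $g_0$ can be arranged to stay inside $I^m$, but this is handled either by extending componentwise with bounds preserved (Tietze's theorem preserves the interval $[0,1]$) or by post-composing with the retraction onto $I^m$. The disjointness of $K$ and $K'$, which is what allows the two-piece definition of $g_0$ to be consistent and continuous, is the essential structural ingredient and comes for free from the hypothesis that $K$ is a \emph{proper} closed subset.
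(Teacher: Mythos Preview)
Your proof is correct and follows essentially the same route as the paper: embed $K$ homeomorphically into a region of $I^m\setminus K$, define $g$ on $K$ as this embedding and on the image as $f$ composed with the inverse embedding, then extend continuously to all of $I^m$. The only cosmetic difference is that the paper invokes Dugundji's extension theorem (which yields an extension with values in the convex set $I^m$ directly), whereas you use coordinatewise Tietze together with the retraction onto $I^m$; both are entirely adequate here.
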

\begin{proof}
	Let $I_1\times I_2\times \cdots \times I_m$ be a closed rectangular
	region in $I^m\setminus K$ and $\tilde{g}$ be a homeomorphism of $K$
	into $I_1\times I_2\times \cdots \times I_m$. Extend $\tilde{g}$ to
	$K\cup R(\tilde{g})$ by setting $g(x)=f(\tilde{g}^{-1}(x))$ for
	$x\in R(\tilde{g})$. Then $g$ is continuous on the closed set $K\cup
	R(\tilde{g})$, and therefore by  an extension of Tietze's extension  theorem
	by Dugundji (see Theorem 4.1 and Corollary 4.2 of \cite[pp.357-358]{dugundji}) it has an
	extension to a continuous self-map of $I^m$, which also we denote by $g$. Further,
	for each $x\in K$, we have
	\begin{eqnarray*}
		g^2(x)&=&g(g(x))\\
		&=&g(\tilde{g}(x))\\
		&=&f(\tilde{g}^{-1}(\tilde{g}(x)))~~(\text{since}~\tilde{g}(x)\in R(\tilde{g}))\\
		&=&f(x),
	\end{eqnarray*}
	implying that $f=g^2|_K$.
\end{proof}
It is worth noting that the assumption in the above theorem that $K$ is a proper subset of $I^m$ cannot be dropped. Indeed, if $K=I$, then the continuous map $f(x)=1-x$ on $K$ has no square roots in $\mathcal{C}(I)$ (see \cite[pp.425-426]{kuczma1990} or Corollary \ref{C1}). Such maps can be constructed on $K=I^m$ for general $m$ using Theorem \ref{connected components}.
We now prove our desired result.
\begin{theorem}
	$\mathcal{W}(2;I^m)$
	is $L^p$ dense in $\mathcal{C}(I^m)$.
\end{theorem}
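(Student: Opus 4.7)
The plan is to combine Theorem \ref{T4} with the observation that the $L^p$ norm is insensitive to modifications on sets of small Lebesgue measure. The key point is that since every map in $\mathcal{C}(I^m)$ is bounded (its image lies in $I^m$, which has $\|\cdot\|_\infty$-diameter $1$), two such maps can differ in $L^p$-norm by at most $\mu(E)^{1/p}$ on any set $E$ on which we have no control over their values. So if we can adjust $f$ to become an iterative square by redefining it on an arbitrarily small open region, we win.

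Concretely, given $f\in \mathcal{C}(I^m)$ and $\epsilon>0$, I would first pick a nontrivial closed rectangular region $R=I_1\times I_2\times \cdots \times I_m \subseteq I^m$ whose Lebesgue measure satisfies $\mu(R)<\epsilon^p$ (we may take $R$ to sit in the interior of $I^m$, and in particular $R^0\neq\emptyset$). Let $K:=I^m\setminus R^0$; this is a closed proper subset of $I^m$, and the restriction $f|_K$ is a continuous map from $K$ into $I^m$. By Theorem \ref{T4} applied to $f|_K$, there exists $g\in \mathcal{C}(I^m)$ with $g^2|_K=f|_K$ (indeed, $R^0$ contains a closed rectangular subregion in which the auxiliary homeomorphism of Theorem \ref{T4} is deployed).

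With this $g$ in hand, $f$ and $g^2$ agree on $K$ and differ only on $R^0$. Because both $f$ and $g^2$ are self-maps of $I^m$, for every $x\in R^0$ we have $\|f(x)-g^2(x)\|_\infty \leq \mathrm{diam}_{\|\cdot\|_\infty}(I^m)=1$. Hence
\begin{equation*}
\|f-g^2\|_p^{\,p}=\int_{R^0}\|f(x)-g^2(x)\|_\infty^{\,p}\,d\mu(x)\leq \mu(R^0)\leq \mu(R)<\epsilon^p,
\end{equation*}
so $\|f-g^2\|_p<\epsilon$. Since $g^2\in \mathcal{W}(2;I^m)$ and $\epsilon>0$ was arbitrary, this shows the claimed density.

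There is essentially no obstacle here: the entire difficulty has already been absorbed into Theorem \ref{T4}, whose proof used Dugundji's extension of Tietze's theorem to manufacture a square out of any continuous map defined on a proper closed subset. The only thing to verify is the trivial measure estimate above and the triviality that the $\|\cdot\|_\infty$-diameter of $I^m$ is finite, so that replacing $f$ by $g^2$ on the tiny region $R^0$ costs only $O(\mu(R)^{1/p})$ in $L^p$. This mirrors the $m=1$ argument of \cite{Humke-Laczkovich} and requires no new ingredient beyond Theorem \ref{T4}.
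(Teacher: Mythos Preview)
Your argument is correct and is essentially the paper's own proof: both invoke Theorem~\ref{T4} on a proper closed subset $K\subseteq I^m$ whose complement has small Lebesgue measure, then use the trivial bound $\|f(x)-g^2(x)\|_\infty\le 1$ on $I^m\setminus K$ to control the $L^p$ distance. The only cosmetic difference is that the paper takes $K=[\epsilon,1]^m$ (so the complement is an $L$-shaped region of measure $1-(1-\epsilon)^m$), whereas you excise a small rectangular region $R$ from the interior; either choice works equally well.
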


\begin{proof}
	Consider an arbitrary $f\in \mathcal{C}(I^m)$. Then, by Theorem \ref{T4}, for each $\epsilon>0$ there exists a map $g_\epsilon \in \mathcal{C}(I^m)$ such that $f|_{I_\epsilon\times I_\epsilon\times \cdots \times I_\epsilon}=g_\epsilon^2|_{I_\epsilon\times I_\epsilon\times \cdots \times I_\epsilon}$, where $I_\epsilon=[\epsilon, 1]$. Now,
	%
	%
	\begin{eqnarray*}
		\|g_\epsilon^2-f\|_p^p&=&\int_{I^m}^{} \|g_\epsilon^2(x)-f(x)\|_{\infty }^pd\mu(x)\\
		&=&\int_{({I_\epsilon\times I_\epsilon\times \cdots \times I_\epsilon})^c}^{} \|g_\epsilon^2(x)-f(x)\|_{\infty}^pd\mu(x)\\
		&\le&  \mu(({I_\epsilon\times I_\epsilon\times \cdots \times I_\epsilon})^c)\\
		&=&1-(1-\epsilon)^m,
	\end{eqnarray*}
	implying that $\|g_\epsilon^2-f\|_p \to 0$ as $\epsilon \to 0$. Hence $\lim\limits_{\epsilon \to 0}g_\epsilon^2=f$ in $L^p$, and the result follows.
\end{proof}


\noindent {\bf Acknowledgments:} 	The authors are grateful to the referee for his/her many helpful comments.
The first author is supported by J
C Bose Fellowship of the Science and Engineering Board, India. The second author is
supported by the Indian Statistical Institute,  Bangalore 
through J C Bose Fellowship of the
first author, and by the National Board for Higher Mathematics, India through No:
0204/3/2021/R\&D-II/7389.



\end{document}